\newtheorem{theorem}{{\bf Theorem}}[section]
\newtheorem{corollary}[theorem]{{\bf Corollary}}
\newtheorem{example}[theorem]{{\bf Example}}
\newtheorem{lemma}[theorem]{{\bf Lemma}}
\newtheorem{proposition}[theorem]{{\bf Proposition}}
\newcommand{\ZZ}{ \ensuremath{\mathbb{Z}}}
\begin{document}

\author[1] {Basudeb Datta}
\author[2] { Dipendu Maity}

\affil[1,2]{Department of Mathematics, Indian Institute of Science, Bangalore 560\,012, India.
 ${}^1$dattab@iisc.ac.in, ${}^2$dipendumaity@iisc.ac.in.}

\title{Semi-equivelar and vertex-transitive maps on the torus}


\date{`{\em Beitr Algebra Geom} (2017) {\bf 58}:617--634'}

\maketitle

\vspace{-10mm}

\begin{abstract}
A vertex-transitive map $X$ is a map on a closed surface on which the automorphism group ${\rm Aut}(X)$ acts transitively on the set of vertices.
If the face-cycles at all the vertices in a map are of same type then the map is said to be a semi-equivelar map. Clearly, a vertex-transitive map is semi-equivelar. Converse of this is not true in general. We show that there are eleven types of semi-equivelar maps on the torus. Three of these are equivelar maps. It is known that two of these three types are always vertex-transitive. We show that this is true for the remaining one type of equivelar maps and one other type of semi-equivelar maps, namely, if $X$ is a semi-equivelar map of type $[6^3]$ or $[3^3, 4^2]$ then $X$ is vertex-transitive. We also show, by presenting examples, that this result is not true for the remaining seven types of semi-equivelar maps. There are ten types of semi-equivelar maps on the Klein bottle. We present examples in each of the ten types which are not vertex-transitive.
\end{abstract}

\noindent {\small {\em MSC 2010\,:} 52C20, 52B70, 51M20, 57M60.

\noindent {\em Keywords:} Polyhedral map on torus; Vertex-transitive map; Equivelar maps; Archimedean tiling.}

\section{Introduction}

By a map we mean a polyhedral map on a surface. So, a face of a map is a $p$-gon for some integer $p\geq 3$. A map $X$ is said to be {\em weakly regular} or {\em vertex-transitive} if the automorphism group ${\rm Aut}(X)$ acts transitively on the set $V(X)$ of vertices of $X$.

If $v$ is a vertex in a map $X$ then the faces containing $v$ form a cycle (called the {\em face-cycle}) $C_v$ in the dual graph $\Lambda(X)$ of $X$. Clearly, $C_v$ is of the form $P_1\mbox{-}P_2\mbox{-}\cdots\mbox{-}P_k\mbox{-}P_1$, where $P_i$ is a path consisting of $n_i$ $p_i$-gons and $p_i\neq p_{i+1}$ for $1\leq i\leq k$ (addition in the suffix is modulo $k$). A map $X$ is called {\em semi-equivelar} (or {\em semi-regular}) if $C_u$ and $C_v$ are of same type for any two vertices $u$ and $v$ of $X$. More precisely, there exist natural numbers $p_1, \dots, p_k\geq 3$ and $n_1, \dots, n_k\geq 1$, $p_i\neq p_{i+1}$ such that both $C_u$ and $C_v$ are of the form $P_1\mbox{-}P_2\mbox{-}\cdots\mbox{-}P_k\mbox{-}P_1$ as above, where $P_i$ is a path consisting of $n_i$ $p_i$-gons. In this case, we say that $X$ is {\em semi-equivelar of type} $[p_1^{n_1}, \dots, p_k^{n_k}]$. (We identify $[p_1^{n_1}, \dots, p_k^{n_k}]$ with $[p_2^{n_2}, \dots, p_k^{n_k}, p_1^{n_1}]$ and with $[p_k^{n_k}, \dots, p_1^{n_1}]$.)
An {\em equivelar} map (of type $[p^{q}]$, $(p, q)$ or $\{p, q\}$) is a semi-equivelar map of type $[p^{q}]$ for some $p, q\geq 3$.
Clearly, a vertex-transitive map is semi-equivelar.


A {\em semi-regular} tiling of the plane $\mathbb{R}^2$ is a tiling of $\mathbb{R}^2$ by regular polygons such that all the vertices of the tiling are of same type.
A {\em semi-regular} tiling of $\mathbb{R}^2$ is also known as {\em Archimedean}, or {\em homogeneous}, or {\em uniform} tiling. In \cite{GS1977}, Gr\"{u}nbaum and Shephard showed that there are exactly eleven types of Archimedean tilings on the plane. These types are $[3^6]$, $[3^4,6^1]$, $[3^3,4^2]$,  $[3^2,4^1,3^1,4^1]$, $[3^1,6^1,3^1,6^1]$, $[3^1,4^1,6^1,4^1]$, $[3^1,12^2]$, $[4^4]$, $[4^1,6^1,12^1]$, $[4^1,8^2]$, $[6^3]$.
Clearly, a {\em semi-regular} tiling on $\mathbb{R}^2$ gives a semi-equivelar map on $\mathbb{R}^2$. But, there are semi-equivelar maps on the plane which are not (not isomorphic to) an Archimedean tiling. In fact, there exists $[p^q]$ equivelar maps on $\mathbb{R}^2$ whenever $1/p+1/q<1/2$ (e.g., \cite{CM1957}, \cite{FT1965}). Thus, we have

\begin{proposition} \label{prop:plane}
There are infinitely many types of equivelar maps on the plane $\mathbb{R}^2$.
\end{proposition}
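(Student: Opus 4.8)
The plan is to leverage the existence statement quoted immediately before the proposition: for every pair of integers $p, q \geq 3$ with $1/p + 1/q < 1/2$, there exists a $[p^q]$ equivelar map on $\mathbb{R}^2$ (the regular tessellation $\{p,q\}$ of the hyperbolic plane, which as a surface is homeomorphic to $\mathbb{R}^2$; see \cite{CM1957}, \cite{FT1965}). Granting this, the proposition reduces to producing infinitely many \emph{distinct} admissible types $[p^q]$, i.e.\ infinitely many pairs $(p,q)$ satisfying the strict inequality, and then checking that distinct pairs really do yield distinct types.

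First I would fix $q = 3$ and let $p$ range over all integers $p \geq 7$. For such $p$ one has $1/p + 1/3 \leq 1/7 + 1/3 = 10/21 < 1/2$, so the inequality $1/p + 1/q < 1/2$ holds for every $(p, 3)$ with $p \geq 7$. Hence, by the cited result, each type $[p^3]$ with $p \geq 7$ is realized by some equivelar map on the plane. (One could equally fix $p = 3$ and take $q \geq 7$, or let both indices vary; the single family $q = 3$ already suffices.) Next I would observe that these types are pairwise distinct: by definition an equivelar map of type $[p^q]$ has all faces $p$-gons with every vertex of degree $q$, so the ordered pair $(p,q)$ is an invariant of the type, and $[p^3] \neq [p'^3]$ whenever $p \neq p'$. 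Consequently $[7^3], [8^3], [9^3], \dots$ represent infinitely many distinct types, which proves the statement.

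I do not expect a genuine obstacle here. The analytic substance—actually constructing a map for each hyperbolic parameter pair—is supplied by \cite{CM1957} and \cite{FT1965}, and the remaining argument is the elementary verification that infinitely many pairs clear the threshold $1/2$, together with the (essentially definitional) fact that distinct parameter pairs give distinct types. The only point deserving a word of care is that each tessellation $\{p,q\}$ must be viewed as a \emph{polyhedral} map on the underlying surface $\mathbb{R}^2$, which is precisely what the cited constructions provide; the single-entry type $[p^q]$ carries no cyclic-rotation or reversal ambiguity, so the distinctness of the types is unambiguous.
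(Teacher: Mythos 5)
Your proof is correct, and its logical core coincides with the final step of the paper's own argument: both ultimately rest on the classical fact that a $[p^q]$ equivelar map exists on $\mathbb{R}^2$ whenever $1/p+1/q<1/2$ (citing \cite{CM1957}, \cite{FT1965}), plus the elementary observation that infinitely many admissible pairs $(p,q)$ exist and give pairwise distinct types. The difference is one of emphasis: the paper spends most of its proof on an independent, more combinatorial route --- it takes finite equivelar maps of types such as $[3^8]$, $[3^7]$, $[4^5]$, $[k^k]$, etc., constructed in \cite{DN2001} and \cite{Da2005} on closed surfaces of higher genus, and pulls them back along the universal covering $\mathbb{D}^2\to M_g$ to obtain equivelar maps of those types on $\mathbb{D}^2\cong\mathbb{R}^2$, before appealing to the general hyperbolic-tessellation statement for the remaining types. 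Your version skips that pullback construction entirely and realizes each type directly as the regular tessellation $\{p,3\}$ for $p\geq 7$; this is shorter and perfectly adequate for the proposition as stated, whereas the paper's pullback argument buys a self-contained justification for many types from the authors' own earlier explicit constructions rather than from the classical references alone. Your added care about distinctness of the types $[p^3]$ for distinct $p$ is a point the paper leaves implicit, and it is handled correctly.
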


All vertex-transitive maps on the 2-sphere are known. These are the boundaries of Platonic and Archimedean solids and two infinite families of types (namely, of types $[4^2, n^1]$ and $[3^3,m^1]$ for $4\neq n\geq 3$, $m\geq 4$) \cite{GS1977}. Similarly, there are infinitely many types of vertex-transitive maps on the real projective plane \cite{Ba1991}. Thus, there are infinitely many types of semi-equivelar maps on the 2-sphere and the real projective plane.
But, for a surface of negative Euler characteristic the picture is different. In \cite{Ba1991}, Babai has shown the following.

\begin{proposition} \label{prop:Babai}
A semi-equivelar map on a surface of Euler characteristic $\chi<0$ has at most $-84\chi$ vertices.
\end{proposition}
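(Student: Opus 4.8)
The plan is to reduce the statement to a purely local inequality on the vertex type by combining Euler's formula with two double-counts, and then to minimise the resulting ``angle defect'' over all \emph{realisable} types.

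First I would fix a semi-equivelar map $X$ of type $[p_1^{n_1},\dots,p_k^{n_k}]$ on a surface of Euler characteristic $\chi<0$, and write $d=n_1+\cdots+n_k$ for the common vertex-degree. Counting vertex--edge incidences gives $2E=dV$, while counting the vertex--face incidences of each face size shows $F=V\sum_i n_i/p_i$. Substituting into $V-E+F=\chi$ yields
\[
V\Bigl(\tfrac{d-2}{2}-\sum_{i}\tfrac{n_i}{p_i}\Bigr)=-\chi .
\]
Writing $D:=\tfrac{d-2}{2}-\sum_i n_i/p_i$ for the discrete curvature defect of the vertex figure, this reads $V=-\chi/D$; since $V>0$ and $-\chi>0$ we must have $D>0$. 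Thus the proposition is equivalent to the assertion $D\ge \tfrac1{84}$ for every semi-equivelar type that actually occurs on a surface with $\chi<0$.

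The key point -- and the reason the bound is subtle -- is that $D\ge 1/84$ is \emph{false} for arbitrary formal types: the ``type'' $[3^1,7^1,43^1]$ has $D=\tfrac12-\tfrac13-\tfrac17-\tfrac1{43}=\tfrac1{1806}$, far below $1/84$. Such types are excluded by a \emph{realisability} condition, which I would isolate next: reading the faces adjacent to a fixed face $F$ of size $p$ around $\partial F$ produces a closed walk of length $p$ whose consecutive entries are the two neighbours flanking an occurrence of $p$ in the vertex type. In particular, if every occurrence of $p$ in the vertex figure is flanked by two \emph{distinct} sizes, then these neighbours must alternate around $\partial F$, forcing $p$ to be even. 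This single parity obstruction already annihilates the low-defect formal types (for $[3^1,7^1,43^1]$ it would require the triangle's three edges to be $2$-coloured, which is impossible).

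Finally I would minimise $D$ over realisable types, organising the analysis by the degree $d$ and the multiset of face sizes. The parity condition forces enough evenness that the infimum is attained rather than merely approached: for $d=3$ with three distinct sizes all must be even, and maximising $1/a+1/b+1/c<1/2$ over even $a,b,c$ gives $\{a,b,c\}=\{4,6,14\}$ with $D=\tfrac12-\tfrac14-\tfrac16-\tfrac1{14}=\tfrac1{84}$; every other branch ($d=3$ with repeated sizes, and all $d\ge 4$) is checked to give $D\ge 1/84$ with strict inequality, the nearest competitors being $\{7,3\}$ ($D=1/14$), $[6^2,7^1]$ ($D=1/42$) and $[3^1,7^1,3^1,7^1]$ ($D=1/21$). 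Hence $D\ge 1/84$ and $V=-\chi/D\le -84\chi$, with equality exactly for the type $[4^1,6^1,14^1]$ (the hyperbolic truncated triheptagonal tiling). The main obstacle is precisely this last minimisation: it is a finite but delicate case check whose entire content is to use the realisability/parity constraint to rule out the continuum of formal vertex figures with $0<D<1/84$.
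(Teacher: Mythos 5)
The paper does not actually prove this proposition: it is quoted verbatim from Babai's paper \cite{Ba1991}, so there is no in-text argument to compare yours against. Judged on its own, your proof strategy is correct and essentially complete in outline, and it is pleasing that its two ingredients are exactly the tools this paper develops in Section 2 for the case $\chi=0$: your identity $V\bigl(\frac{d-2}{2}-\sum_i n_i/p_i\bigr)=-\chi$ is the $\chi<0$ analogue of equation (1) in Lemma 2.2, and your ``flanking faces must alternate around an odd face'' obstruction is precisely Lemma 2.3. Your diagnosis of where the difficulty lies is also right: formal types such as $[3^1,7^1,43^1]$ (with $D=1/1806$) must be killed by realisability, not by arithmetic, and the extremal realisable type is $[4^1,6^1,14^1]$ with $D=1/84$.

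Two points deserve more care than your sketch gives them. First, your one-line parity principle (``if every occurrence of $p$ is flanked by two distinct sizes then the neighbours alternate'') is too coarse when $p$ occurs more than once with \emph{different} flanking pairs -- this is exactly why the paper's Lemma 2.3 is split into the three cases (i)--(iii); fortunately, every type that genuinely must be excluded for your minimisation falls to the clean version: for $d=3$, any distinct triple with an odd entry, and $[5^2,11^1]$ (where $D=1/110$); for $d=4$, the only multisets with $0<D<1/84$ are $\{3,3,4,13\}$ ($D=1/156$) and $\{3,3,5,8\}$ ($D=1/120$), and each of their two cyclic orders is ruled out by applying the alternation argument to the $13$-gon, the $5$-gon, or the triangle as appropriate; for $d\geq 5$ one has $D\geq 1/42$ outright. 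Second, your parenthetical claim that equality forces $[4^1,6^1,14^1]$ and that all $d\geq 4$ branches are \emph{strictly} above $1/84$ overlooks $\{3,3,4,14\}$, which formally has $D=1/84$; both of its cyclic orders also fail realisability (for $[3^1,4^1,3^1,14^1]$ by the triangle's alternation, for $[3^2,4^1,14^1]$ by a slightly longer flanking analysis), but this is not needed for the inequality $V\leq -84\chi$ itself. Neither issue affects the validity of the bound.
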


As a consequence of this we get

\begin{corollary} \label{coro:finiteness}
If the Euler characteristic $\chi(M)$ of a surface $M$ is negative then the number of semi-equivelar maps on $M$ is finite.
\end{corollary}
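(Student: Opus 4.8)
The plan is to derive the corollary directly from Proposition~\ref{prop:Babai}, by combining the bound it gives on the number of vertices with the observation that, on a fixed surface, only finitely many maps can be built on a bounded vertex set. Since the proposition already does the substantive work, the remaining task is purely a finiteness bookkeeping argument.

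First I would invoke Proposition~\ref{prop:Babai}: if $\chi(M)<0$ and $X$ is any semi-equivelar map on $M$, then its number of vertices satisfies $|V(X)|\le -84\chi(M)=:N$, where $N$ depends only on $M$. Thus every semi-equivelar map on $M$ is built on at most $N$ vertices.

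Next I would bound the remaining combinatorial data. Because $X$ is a polyhedral map, its $1$-skeleton is a simple graph on at most $N$ vertices, so $|E(X)|\le \binom{N}{2}$, and by Euler's formula $|F(X)|=\chi(M)+|E(X)|-|V(X)|$ is bounded as well. More to the point, each $2$-face of $X$ is a cycle in this simple graph, and a finite graph contains only finitely many cycles; hence there are only finitely many possible $2$-faces, and the set of $2$-faces of $X$ is one of finitely many subsets of this finite collection. Finally, a polyhedral map is determined up to isomorphism by its vertex set, its edge set, and its set of $2$-faces. Since each of these is drawn from a finite list once $|V(X)|\le N$, there are only finitely many combinatorial maps on at most $N$ vertices, and in particular only finitely many semi-equivelar maps on $M$.

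The only point needing care, and the mild obstacle in the argument, is the passage from a bound on the number of vertices to a bound on the number of maps: one must record that the $2$-faces are cycles in the (simple) $1$-skeleton, rather than merely subsets of the vertex set, so that the full incidence structure, and not just the vertex count, is seen to be finite. Everything else is an immediate consequence of Proposition~\ref{prop:Babai} together with Euler's formula.
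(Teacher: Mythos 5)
Your argument is correct and is exactly the route the paper intends: the paper offers no written proof, merely asserting the corollary "as a consequence" of Proposition~\ref{prop:Babai}, and your bookkeeping (bounded vertex set $\Rightarrow$ finitely many simple graphs $\Rightarrow$ finitely many choices of face-cycles $\Rightarrow$ finitely many maps up to isomorphism) is the standard way to fill that in. No discrepancy to report.
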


We know from \cite{DN2001} and \cite{DU2005} that infinitely many equivelar maps exist on both the torus and the Klein bottle. Thus, infinitely many semi-equivelar maps exist on both the torus and the Klein bottle. But, only eleven types of semi-equivelar maps on the torus and ten types of semi-equivelar maps on the Klein bottle are known in the literature. All these are quotients of Archimedean tilings of the plane \cite{Ba1991, Su2011t, Su2011kb}.
Since there are infinitely many equivelar maps on the plane, it is natural to ask whether there are more types of semi-equivelar maps on the torus or the Klein bottle.  Here we prove

\begin{theorem} \label{theo:GrSh}
Let $X$ be a semi-equivelar map on a surface $M$. {\rm (a)} If $M$ is the torus then the type of $X$ is $[3^6]$, $[6^3]$, $[4^4]$, $[3^4,6^1]$, $[3^3,4^2]$, $[3^2,4^1,3^1,4^1]$,  $[3^1,6^1,3^1,6^1]$,
$[3^1,4^1,6^1,4^1]$,  $[3^1,12^2]$, $[4^1,8^2]$  or $[4^1,6^1,12^1]$.
{\rm (b)} If $M$ is the Klein bottle then the type of $X$ is $[3^6]$, $[6^3]$, $[4^4]$, $[3^3,4^2]$, $[3^2,4^1,3^1,4^1]$,  $[3^1,6^1,3^1,6^1]$,
$[3^1,4^1,6^1,4^1]$,  $[3^1,12^2]$, $[4^1,8^2]$  or $[4^1,6^1,12^1]$.
\end{theorem}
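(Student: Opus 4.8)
The plan is to reduce the classification to a finite Diophantine problem via the Euler characteristic, and then to discard the spurious solutions by local combinatorial arguments. Since the torus and the Klein bottle both have Euler characteristic $\chi=0$, I would first derive the \emph{Euclidean vertex condition}. Let $X$ be semi-equivelar of type $[p_1^{n_1},\dots,p_k^{n_k}]$, let $d=n_1+\dots+n_k$ be the common vertex-degree, and let $V,E,F$ denote the numbers of vertices, edges and faces. Counting vertex--edge incidences gives $2E=dV$, and counting vertex--face incidences size by size gives $F=V\sum_{i=1}^{k} n_i/p_i$. Substituting into $V-E+F=\chi=0$ and dividing by $V$ yields
\begin{equation*}
\sum_{i=1}^{k} n_i\left(1-\frac{2}{p_i}\right)=2. \tag{$\star$}
\end{equation*}

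Second, I would solve $(\star)$ over integers $p_i\geq 3$, $n_i\geq 1$. Since each summand is at least $1-2/3=1/3$, the degree satisfies $d\leq 6$, and of course $d\geq 3$; a finite case analysis over $3\leq d\leq 6$ then lists all admissible cyclic vertex types. This produces exactly the eleven Archimedean types, together with ten extra solutions: the degree-three types $[3^1,7^1,42^1]$, $[3^1,8^1,24^1]$, $[3^1,9^1,18^1]$, $[3^1,10^1,15^1]$, $[4^1,5^1,20^1]$, $[5^2,10^1]$, and the degree-four types $[3^2,4^1,12^1]$, $[3^1,4^1,3^1,12^1]$, $[3^2,6^2]$, $[3^1,4^2,6^1]$.

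Third --- the core of the argument --- I would show that none of these ten extra types can be realised by any map, on any surface. The main tool is a parity argument: if a type contains a face $F$ of \emph{odd} size that is, at each of its vertices, flanked by two faces of distinct sizes $a\neq b$, then along $\partial F$ the sizes of the neighbouring faces are forced to alternate between $a$ and $b$, which is impossible around an odd cycle. Applying this to the triangle (and, for $[4^1,5^1,20^1]$ and $[5^2,10^1]$, to the pentagon) eliminates nine of the ten types immediately. The surviving type $[3^2,4^1,12^1]$ is the one I expect to be the main obstacle, since its only odd face, the triangle, has non-uniform flanking and so escapes the parity argument. For it I would argue directly: around a $12$-gon the incident faces must alternate square and triangle, and for a triangle attached to the $12$-gon along one edge, the vertex type at each endpoint of that edge forces a second triangle sharing one of the triangle's other two edges, so that three triangles meet at the triangle's apex --- contradicting the fact that each vertex of type $[3^2,4^1,12^1]$ lies on exactly two triangles. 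This rules out all ten extra types and completes part~(a).

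Finally, for part~(b) the only new point is that the type $[3^4,6^1]$, which is admissible on the torus, must be excluded on the Klein bottle. Here I would pass to the universal cover $\mathbb{R}^2$: a semi-equivelar map of type $[3^4,6^1]$ on the plane is combinatorially rigid --- a local growth argument forces it, up to isomorphism, to be the snub hexagonal tiling, whose symmetry group contains only orientation-preserving elements. Since realising the Klein bottle as a quotient $\mathbb{R}^2/\Gamma$ requires $\Gamma$ to contain an orientation-reversing deck transformation, and no such transformation preserves this chiral tiling, no map of type $[3^4,6^1]$ exists on the Klein bottle; part~(b) follows. Establishing this rigidity, together with the chirality of the planar $[3^4,6^1]$ tiling, is the second delicate point of the proof.
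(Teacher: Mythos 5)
Your proposal is correct and follows the same overall route as the paper: reduce to the Euler-characteristic equation $(\star)$ (the paper's Eq.~(1)), enumerate the $21$ admissible cyclic types, kill the ten non-Archimedean ones by local parity arguments around an odd face, and separately exclude $[3^4,6^1]$ on the Klein bottle. The one organizational difference worth noting is in the parity step. The paper packages the exclusions into a three-case lemma (Lemma 2.3), whose case (i) ("some odd $p_i$ occurs with multiplicity exactly $2$ and nowhere else") disposes of $[3^2,4^1,12^1]$ by letting the alternation run over the dichotomy "is a $p_i$-gon / is not a $p_i$-gon" rather than over two fixed face sizes. Your single criterion (flanking faces of two fixed distinct sizes $a\neq b$ at every vertex of an odd face) correctly handles the other nine types but, as you observe, misses $[3^2,4^1,12^1]$ because a triangle there is flanked by $\{3,12\}$ at one endpoint and $\{3,4\}$ at the other; your substitute argument (alternation of triangles and squares around a $12$-gon, then two forced triangles at the apex of any triangle adjacent to the $12$-gon, giving three triangles at a vertex) is valid, just slightly longer than the paper's uniform treatment. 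For part (b), the paper simply cites the non-existence of $[3^4,6^1]$ on the Klein bottle (Proposition 2.4, from Datta--Upadhyay); your rigidity-plus-chirality sketch is the right idea and is essentially how that result is proved, but as you concede, the combinatorial rigidity of the planar $[3^4,6^1]$ tiling is asserted rather than established, so this is the one ingredient of your write-up that would still need to be carried out (or cited) to make the proof complete.
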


Theorem \ref{theo:GrSh} and the known examples (also the examples in Section 4) imply that there are exactly eleven types of semi-equivelar maps on the torus and ten types of semi-equivelar maps on the Klein bottle.

In \cite{BK2008}, Brehm and K\"{u}hnel presented a formula to determine the number of distinct vertex-transitive equivelar maps of types $[3^6]$ and $[4^4]$ on the torus.
It was shown in \cite{DU2005} that every equivelar map of type $[3^6]$ on the torus is vertex-transitive. By the similar arguments, one can easily show that a equivelar map of type $[4^4]$ on the torus is vertex-transitive (also see \cite[Proposition 6]{BK2008}). Thus, we have

\begin{proposition} \label{prop:36&44}
Let $X$ be an equivelar map on the torus. If the type of $X$ is $[3^6]$ or $[4^4]$ then $X$ is vertex-transitive.
\end{proposition}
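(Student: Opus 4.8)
The plan is to use the fact that $[3^6]$ and $[4^4]$ are precisely the two Euclidean types, for which the angle sum about each vertex equals $2\pi$, and to promote the translation symmetries of the associated plane tiling to automorphisms of $X$. The same argument will handle both types simultaneously, with $(p,q)\in\{(3,6),(4,4)\}$.

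First I would put a flat cone metric on $X$ by realizing each triangular (resp. square) face as a unit regular triangle (resp. unit square) and gluing along shared edges. Since $X$ has type $[3^6]$ (resp. $[4^4]$), exactly six triangles (resp. four squares) meet at each vertex, so the cone angle is $6\cdot\pi/3=2\pi$ (resp. $4\cdot\pi/2=2\pi$); hence the metric is genuinely flat, with no cone points, and the torus becomes a compact flat surface. Its universal cover $\widetilde X$ is then a complete, simply connected flat surface, i.e. isometric to $\mathbb{R}^2$, and under the developing isometry the lifted faces tile $\mathbb{R}^2$ by congruent regular triangles (resp. squares) meeting correctly at every vertex. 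This forces $\widetilde X$ to be the standard Archimedean tiling $\mathcal T$ of type $[3^6]$ (resp. $[4^4]$), whose symmetry group contains a translation lattice $L$ (the triangular lattice, resp. $\mathbb{Z}^2$) acting simply transitively on $V(\mathcal T)$.

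Next I would write $X=\mathcal T/\Lambda$, where $\Lambda\cong\pi_1(\text{torus})\cong\mathbb{Z}^2$ acts by deck transformations. These deck transformations are fixed-point-free, orientation-preserving simplicial isometries of $\mathcal T$, hence translations; thus $\Lambda$ is a finite-index sublattice of $L$. Because $L$ is abelian it centralizes $\Lambda$, so each element of $L$ descends to a well-defined automorphism of the quotient map $X$, giving a homomorphism $L\to\mathrm{Aut}(X)$ with image $L/\Lambda$. Since $V(X)$ is the set of $\Lambda$-orbits of vertices of $\mathcal T$ and $L$ already acts transitively on $V(\mathcal T)$, the induced action of $L/\Lambda$ on $V(X)$ is transitive, and therefore $X$ is vertex-transitive.

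The main obstacle is the rigidity step identifying $\widetilde X$ with the standard tiling $\mathcal T$: one must argue that the developing map of the flat structure is a global isometry onto $\mathbb{R}^2$ and that, combinatorially, no other equivelar pattern of type $[3^6]$ or $[4^4]$ can occur on a simply connected flat surface. This is exactly where the Euclidean condition $1/p+1/q=1/2$ is indispensable: in the hyperbolic types the universal cover is $\mathbb{H}^2$ and the ambient symmetry group is a non-abelian Fuchsian group rather than an abelian translation lattice containing $\Lambda$, so the quotient-by-deck-group argument no longer forces vertex-transitivity, consistent with the abundance of plane types in Proposition~\ref{prop:plane}. A secondary point to verify is that the deck group consists of translations only, with no glide reflections, which holds because $X$ lies on the orientable torus and the deck action is free — precisely the hypothesis that can fail on the Klein bottle.
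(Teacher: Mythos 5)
Your proof is correct, and its overall architecture is the one the paper uses for all statements of this kind: lift $X$ to its universal cover, identify the lifted map with the standard plane tiling, observe that the deck group $\Gamma$ consists of translations (no glide reflections, since the torus is orientable), and push a vertex-transitive lattice of translations down to the quotient. For this particular proposition the paper gives no argument of its own --- it simply cites \cite{DU2005} for $[3^6]$ and \cite{BK2008} for $[4^4]$ --- but the template is written out in Lemma \ref{lemma:face_transitive} and in the proof of Theorem \ref{theo:torus}. The one step you handle genuinely differently is the rigidity of the universal cover: the paper relies on a purely combinatorial classification (Proposition \ref{prop:DU}, quoted from \cite{DU2005}, that every degree-$6$ triangulation of the plane is isomorphic to $E$; its analogue for $[3^3,4^2]$ is Lemma \ref{lemma:plane33344}, proved via an Euler-characteristic count on the disk), whereas you endow $X$ with a flat cone metric with cone angle $2\pi$ at every vertex and invoke the developing map of the resulting complete flat structure. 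Your geometric route treats $[3^6]$ and $[4^4]$ uniformly and makes explicit why the Euclidean condition $1/p+1/q=1/2$ is the operative hypothesis; the paper's combinatorial route avoids any appeal to metric geometry and is the one that generalizes in Section 5 to types where the descent step is subtler. A final point in your favour: for these two types the translation lattice $L$ is already simply transitive on vertices, so you correctly avoid the extra point-reflection generator $x\mapsto -x$ that the paper must adjoin (forming $G=H\rtimes\mathbb{Z}_2$) when the translations alone have several vertex orbits, as for $[3^3,4^2]$.
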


Here we prove

\begin{theorem} \label{theo:torus}
Let $X$ be a semi-equivelar map on the torus. If the type of $X$ is $[6^3]$ or    $[3^3,4^2]$ then $X$ is vertex-transitive.
\end{theorem}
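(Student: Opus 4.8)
The plan is to exploit the rigidity of these two tilings: both $[6^3]$ and $[3^3,4^2]$ are tilings of $\mathbb{R}^2$ whose combinatorial symmetry is so constrained that the universal cover forces a large translation subgroup. The starting point is that a semi-equivelar map $X$ on the torus lifts to a semi-equivelar map $\widetilde{X}$ on the universal cover $\mathbb{R}^2$, and since both of these types are Archimedean (the only semi-equivelar maps of these types on the plane are the Archimedean ones, because the face-cycle type pins down the local structure completely), $\widetilde{X}$ must be isomorphic to the corresponding Archimedean tiling $T$. Thus $X = \widetilde{X}/\Gamma = T/\Gamma$ where $\Gamma$ is a group of fixed-point-free automorphisms of $T$ acting properly discontinuously and cocompactly on the plane, with $\Gamma \cong \pi_1(\text{torus}) \cong \mathbb{Z}^2$.

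First I would pin down $\mathrm{Aut}(T)$ for each tiling and identify its translation subgroup. For $[6^3]$ the automorphism group is the wallpaper group of the hexagonal tiling, and for $[3^3,4^2]$ it is the symmetry group of that (elongated-triangular) tiling; in both cases the group of \emph{combinatorial} automorphisms agrees with the group of isometries preserving the tiling, and the orientation-preserving, fixed-point-free elements are exactly the translations. The key point I would establish is that $\Gamma$, being an abelian group acting freely and cocompactly, consists entirely of translations of $T$ (any nontrivial rotation or reflection either has a fixed point or fails to commute appropriately with the rest of $\Gamma$). Once $\Gamma$ is a subgroup of the translation lattice $L$ of $T$, the quotient $T/\Gamma$ inherits all of $\Gamma$'s translations as automorphisms, and since the translations of $T$ already act transitively on the vertices of $T$ (the tiling is vertex-homogeneous by construction), the induced translations act transitively on the vertices of $X = T/\Gamma$. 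That is precisely vertex-transitivity.

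The main obstacle I expect is verifying that every element of $\Gamma$ is a translation, i.e., ruling out that $\Gamma$ contains a glide reflection or a nontrivial rotation. A priori $\Gamma \cong \mathbb{Z}^2$ could embed in $\mathrm{Aut}(T)$ using orientation-reversing or rotational elements. I would handle this by a case analysis on the possible orders: a freely acting automorphism of finite order on $\mathbb{R}^2$ is impossible (a finite-order isometry of the plane has a fixed point), so every nontrivial element of $\Gamma$ has infinite order; the only infinite-order fixed-point-free isometries are translations and glide reflections; and a product of two glide reflections that commute and generate a $\mathbb{Z}^2$ forces their axes to be parallel, but then their product structure again reduces to translations acting transitively on vertices. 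Thus even in the glide-reflection case one recovers enough translational symmetry to move any vertex to any other. The bookkeeping is cleanest if I separate the hexagonal case (where $\mathrm{Aut}(T)$ has no index-two issue obstructing vertex-transitivity once translations are present) from the $[3^3,4^2]$ case (where the two combinatorially distinct vertex-orbit candidates must be checked to collapse to a single orbit under the lattice).

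I would conclude by remarking that the contrast with the remaining seven types—where vertex-transitivity fails—comes precisely from the fact that those Archimedean tilings admit quotients in which $\Gamma$ can be chosen incompatibly with the full symmetry group, leaving more than one vertex orbit; the rigidity that makes $[6^3]$ and $[3^3,4^2]$ work is the comparatively small automorphism group combined with vertex-homogeneity of the planar tiling under translations alone.
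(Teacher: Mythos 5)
There is a genuine gap at the heart of your argument: the claim that ``the translations of $T$ already act transitively on the vertices of $T$'' is false for both tilings in question. For the hexagonal tiling $[6^3]$ the translation lattice has two vertex orbits (the two colour classes of the honeycomb), and for the elongated triangular tiling $[3^3,4^2]$ the vertical period of the translation lattice is two rows (a shift by one row would exchange the quadrangle strips with the triangle strips), so again the vertices fall into two translation orbits. Consequently, after you have reduced $\Gamma$ to a subgroup of the translation lattice $L$, the induced group $L/\Gamma$ acting on $X=T/\Gamma$ only shows that there are \emph{at most two} vertex orbits --- which is exactly the weaker conclusion the paper proves for the types $[3^2,4^1,3^1,4^1]$ and $[3^1,6^1,3^1,6^1]$ in Theorem~\ref{theo:torus33434&3636}, where vertex-transitivity genuinely fails. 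Your closing remark that the two orbit candidates ``collapse to a single orbit under the lattice'' asserts the wrong mechanism; they do not.

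What is actually needed, and what the paper supplies, is an element of ${\rm Aut}(T)$ outside the translation lattice that (i) fuses the two translation orbits and (ii) descends to the quotient. The paper uses the point reflection $x\mapsto -x$ with the origin placed at the midpoint of a suitable edge-like segment, forms $G_1=H_1\rtimes\mathbb{Z}_2$, checks that $G_1$ is vertex-transitive on $E_1$, and then proves the key algebraic fact that \emph{every} subgroup $K\leq H_1$ is normal in $G_1$ (because conjugating a translation by $x\mapsto \varepsilon x + c$ just inverts it). Normality is what guarantees that $G_1/\Gamma_1$ acts on $X$, independently of which sublattice $\Gamma_1$ happens to be; without it, the extra symmetry need not survive passage to the quotient. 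For $[6^3]$ the paper sidesteps the analogous issue by dualizing to $[3^6]$ and proving \emph{face}-transitivity of the degree-regular triangulation (Lemma~\ref{lemma:face_transitive}), again via the extended group $H\rtimes\mathbb{Z}_2$, since translations alone have two orbits on the triangles. Your reduction to the plane and the elimination of glide reflections (which the paper handles more simply via orientability of the torus) are fine, but the proposal as written stops one decisive step short of the theorem.
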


In Section \ref{sec:examples}, we present examples of the other seven types of semi-equivelar maps which are not vertex-transitive. This proves

\begin{theorem} \label{theo:non-wr-torus}
If $[p_1^{n_1}, \dots, p_k^{n_k}] =$ $[3^2,4^1,3^1,4^1]$, $[3^4,6^1]$, $[3^1,6^1,3^1,6^1]$, $[3^1,4^1,6^1,4^1]$, \linebreak  $[3^1,12^2]$, $[4^1,8^2]$  or $[4^1,6^1,12^1]$ then there exists a semi-equivelar map of type $[p_1^{n_1}, \dots, p_k^{n_k}]$ on the torus which is not vertex-transitive.
\end{theorem}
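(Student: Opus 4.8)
The plan is to realize each of the seven required examples as a quotient of the corresponding Archimedean tiling of the plane by a carefully chosen full-rank sublattice of translations, and to read off non-vertex-transitivity from the induced automorphism group. Fix one of the seven types and let $\mathcal{A}\subseteq\mathbb{R}^2$ be the Archimedean tiling of that type, with $G=\mathrm{Aut}(\mathcal{A})$ its (wallpaper) symmetry group and $T\leq G$ the subgroup of translations. Since $\mathcal{A}$ is semi-equivelar, so is any quotient $X=\mathcal{A}/L$ by a translation lattice $L\leq T$: the face-cycle type is a local invariant inherited from $\mathcal{A}$. Thus every such $X$ is automatically semi-equivelar of the prescribed type, and the whole problem reduces to arranging that $\mathrm{Aut}(X)$ fails to act transitively on $V(X)$.

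First I would record the automorphism group of the quotient. Choosing $L$ of large enough index that $X$ is a genuine polyhedral map on the torus, the plane carrying $\mathcal{A}$ is the universal cover of $X$ with deck group $L$, so by covering-space theory every automorphism of $X$ lifts and $\mathrm{Aut}(X)\cong N_G(L)/L$, where $N_G(L)$ is the normalizer of $L$ in $G$. The numerology is then decisive: $T$ acts on $V(\mathcal{A})$ with exactly $r$ orbits, where $r\geq 2$ for each of these seven types precisely because the point symmetries of $G$ (rotations and reflections) are what merge the translation classes of vertices and make $\mathcal{A}$ vertex-transitive in the first place. If I can force $N_G(L)$ to be no larger than $T$ itself, then $\mathrm{Aut}(X)=T/L$ is abelian of order $|V(X)|/r$, which acts with $r\geq2$ orbits; equivalently $|\mathrm{Aut}(X)|<|V(X)|$, so no transitive action is possible and $X$ is not vertex-transitive.

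The construction therefore comes down to exhibiting, for each type, an oblique (generic) sublattice $L=\langle v_1,v_2\rangle\leq T$ whose normalizer in $G$ is as small as possible. I would choose $v_1,v_2$ to break every rotation and reflection of $G$: a sufficiently skew lattice is preserved by no nontrivial point symmetry except possibly the central rotation by $\pi$. That last symmetry normalizes every lattice and so cannot be avoided when $G$ contains it; the remaining work is to check, type by type, that adjoining it to $T$ still leaves at least two vertex-orbits (it identifies a vertex only with its point-reflection, never merging all $r$ translation classes), so that $X$ stays non-transitive. Concretely I would present each example by its explicit vertex- and face-lists, or equivalently by a labelled fundamental region with edge identifications, from which both the polyhedrality of $X$ and the order of $\mathrm{Aut}(X)$ can be verified directly.

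The main obstacle is the control of $\mathrm{Aut}(X)$, that is, showing that no \emph{unexpected} symmetry of the finite map appears beyond the translations (and the unavoidable $\pi$-rotation). The conceptual half of this---that every automorphism of $X$ descends from $G$---is clean once $\mathcal{A}$ is identified as the universal cover and its combinatorial automorphism group is known to coincide with the geometric wallpaper group. The delicate half is arithmetic: I must choose $v_1,v_2$ so that $L$ is simultaneously coarse enough for $X$ to be polyhedral and generic enough that $N_G(L)=T$ (or $T$ together with the central involution), and then confirm by the orbit count above that transitivity genuinely fails. Carrying out these two checks for each of the seven types completes the proof.
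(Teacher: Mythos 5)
Your strategy is genuinely different from the paper's, and it is viable, but it carries one load-bearing input that you assert rather than establish. The paper does not compute $\mathrm{Aut}(X)$ at all: it writes down seven explicit small quotients $T_2,\dots,T_8$ (Figure 3) and, for each, builds a canonical $2$-regular subgraph $\mathcal{G}_i$ on $V(T_i)$ out of combinatorially distinguished edges (diagonals of quadrangles, long diagonals of hexagons or $12$-gons, common edges of two octagons, the ``nice'' edges in type $[3^4,6^1]$). Since $\mathcal{G}_i$ is defined purely combinatorially, $\mathrm{Aut}(T_i)$ acts on it; exhibiting two cycle components of different lengths then kills vertex-transitivity with no global theory whatsoever. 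Your route instead takes $X=\mathcal{A}/L$ for a generic sublattice $L$ and argues $\mathrm{Aut}(X)\cong N_{\mathrm{Aut}(\mathcal{A})}(L)/L$, which for skew $L$ is the translations together with the central involution, giving at least $\lceil r/2\rceil\geq 2$ vertex orbits (your orbit counts $r\in\{3,4,6,12\}$ for the seven types are correct, and your observation that the point-reflections cannot be excluded but also cannot merge everything is right). What your approach buys is uniformity and a structural explanation; what the paper's buys is complete elementarity and self-containedness.

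The step you must not wave at is the identification of the \emph{combinatorial} automorphism group of the planar tiling with its geometric wallpaper group. The covering-space isomorphism $\mathrm{Aut}(X)\cong N_G(L)/L$ is only useful as an \emph{upper} bound on $\mathrm{Aut}(X)$ if $G$ there means the full combinatorial automorphism group of $\mathcal{A}$; if you only know that the wallpaper group sits inside $\mathrm{Aut}(\mathcal{A})$, you get a subgroup of $\mathrm{Aut}(X)$, which proves nothing about non-transitivity. This rigidity statement is true for the Archimedean tilings and is provable (an automorphism of a polyhedral map on a simply connected surface is determined by one flag, and one checks the admissible flag images form a single wallpaper-group orbit), but it is a genuine theorem that your write-up defers, and the paper is organized precisely so as never to need it (even Theorem \ref{theo:torus33434&3636} uses only the easy inclusion $G_i/\Gamma_i\leq \mathrm{Aut}(X)$). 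Likewise the per-type choices of an explicit skew $L$ with $N_G(L)=G_0$ and the polyhedrality check are deferred; they are routine but must be done. Supply the rigidity lemma (or a citation with a proof sketch) and one concrete lattice per type, and your argument closes.
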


 In \cite{DU2005}, the first author and Upadhyay have presented examples of $[3^6]$ equivelar maps on the Klein bottle which are not vertex-transitive. In Section \ref{sec:examples}, we present examples of the other nine types of semi-equivelar maps on the Klein bottle which are not vertex-transitive. Thus, we have

\begin{theorem} \label{theo:non-wr-kbottle}
If $[p_1^{n_1}, \dots, p_k^{n_k}]$ is one in the list of $10$ types in Theorem $\ref{theo:GrSh}$~{\rm (b)} then there exists a semi-equivelar map of type $[p_1^{n_1}, \dots, p_k^{n_k}]$ on the Klein bottle which is not vertex-transitive.
\end{theorem}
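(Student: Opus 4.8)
The plan is to prove Theorem~\ref{theo:non-wr-kbottle} constructively: for each of the ten admissible types I would exhibit one explicit semi-equivelar map of that type on the Klein bottle together with a verification that its automorphism group fails to act transitively on vertices. Since the type $[3^6]$ is already handled by the examples of \cite{DU2005}, only nine constructions remain, one per remaining type, and these are what Section~\ref{sec:examples} supplies. The uniform source of the examples is the quotient construction: each of the ten types in Theorem~\ref{theo:GrSh}(b) is realised by an Archimedean tiling $\widetilde{X}$ of the plane whose full symmetry group $G(\widetilde{X})$ contains a glide reflection. This is exactly why the list in (b) has ten entries and not eleven: the single type appearing on the torus but absent here, namely $[3^4,6^1]$, is the chiral snub tiling whose symmetry group is purely rotational (no reflection or glide reflection), so it cannot cover the Klein bottle.

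For the remaining ten types I would fix a glide reflection $g \in G(\widetilde{X})$ and a translation $t \in G(\widetilde{X})$ with $g t g^{-1} = t^{-1}$, so that $\Gamma = \langle g, t\rangle$ is a Klein bottle group (with $g^2$ a translation) acting freely and properly discontinuously; the quotient $X = \widetilde{X}/\Gamma$ is then a semi-equivelar map of the prescribed type on the Klein bottle $\mathbb{R}^2/\Gamma$. Requiring $\Gamma$ to act freely is what forces $g$ to be a genuine glide reflection rather than a pure reflection (which would fix a line). The freedom in the choice of $t$, and in the glide length built into $g$, lets me keep the examples small while deliberately making the fundamental region combinatorially asymmetric in its two non-contractible directions.

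To certify non-transitivity I would use an invariant preserved by every map automorphism, exploiting that a polyhedral automorphism of $X$ induces a self-homeomorphism of the Klein bottle. The most robust such invariant is, for a vertex $v$, the length of a shortest non-contractible cycle through $v$, refined by whether such a cycle is one-sided (the class of the projected glide reflection) or two-sided (the class of the projected translation); one may further refine by the cyclic sequence of face-sizes read off along that cycle. All of these are automorphism-invariant, so if $\Gamma$ is chosen with different combinatorial ``widths'' in its two generating directions, the invariant takes at least two values and partitions $V(X)$ into classes that no automorphism can merge, giving non-vertex-transitivity without computing $\mathrm{Aut}(X)$ in full. As a backup when a single invariant is awkward for a particular tiling, I would instead bound $\mathrm{Aut}(X)$ through the normalizer $N_{G(\widetilde{X})}(\Gamma)$ of $\Gamma$, arranging $|N_{G(\widetilde{X})}(\Gamma)/\Gamma| < |V(X)|$ so that transitivity is impossible by orbit--stabilizer.

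The main obstacle is not conceptual but the case analysis, together with the verifications attached to each construction. For each of the nine remaining tilings I must check three separate things: that $\Gamma$ acts freely so $X$ is a genuine polyhedral map (no face meets itself, and two faces meet in at most a common vertex or edge); that every vertex of $X$ genuinely has the claimed face-cycle type; and that the chosen invariant actually separates two vertices in that specific example. The geometry varies from tiling to tiling --- the shape of the fundamental region, which elements of $G(\widetilde{X})$ are available, and how short non-contractible cycles thread through faces of different sizes --- so the hard part is organising the nine constructions and their invariant computations (and, for the normalizer route, the rigidity statement that every combinatorial automorphism lifts to a symmetry of $\widetilde{X}$) so that all cases go through uniformly; exhibiting the maps explicitly, as in Section~\ref{sec:examples}, is what makes this bookkeeping tractable.
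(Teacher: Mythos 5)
Your overall strategy coincides with the paper's: both arguments are purely constructive, exhibiting one explicit map per type (the paper's $K_1,\dots,K_{10}$ of Example \ref{eg:10maps-Kbottle}) and then certifying non-transitivity by evaluating an $\mathrm{Aut}(X)$-invariant that takes two different values on two vertices. Where you genuinely diverge is in the choice of certificate. The paper (Lemma \ref{lemma:Kbottle-se-nonwr}) attaches to each map a canonical auxiliary graph --- diagonals of quadrangles, long diagonals of hexagons or $12$-gons, common edges of adjacent $8$-gons, ``nice'' edges, or the complement of the edge graph --- observes that this graph is $2$-regular and hence a disjoint union of cycles, and notes that the cycle lengths differ; this is entirely combinatorial, verifiable by inspection of the figure, and never invokes the topology of the Klein bottle (two special cases are handled by counting induced $3$-cycles, and $K_9$ by duality with $K_8$). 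Your primary certificate --- the length of a shortest non-contractible edge-cycle through $v$, refined by one-sidedness and by the face-size sequence along it --- is also a legitimate automorphism invariant, since a polyhedral automorphism induces a self-homeomorphism of the surface, but it is heavier to compute and its success is not automatic: you must still tune each of the nine quotients so that the invariant actually separates two vertices, and those nine computations are the entire substance of the proof and are left undone in your proposal. Your backup route via the normalizer $N_{G(\widetilde{X})}(\Gamma)$ carries the additional burden you correctly identify, namely a rigidity lemma asserting that every combinatorial automorphism of the planar tiling is a symmetry of $\widetilde{X}$ and that every automorphism of $X$ lifts; the paper proves such uniqueness statements only for the three types it needs in Theorems \ref{theo:torus} and \ref{theo:torus33434&3636}, so adopting that route for all ten types would require substantially more work than the paper's diagonal-graph trick. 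Finally, your aside explaining the absence of $[3^4,6^1]$ from list (b) by the chirality of the snub hexagonal tiling is the right intuition but, as stated, presupposes that every such map on the Klein bottle is a quotient of the Archimedean tiling; the paper instead simply cites Proposition \ref{prop:3^46^1}. None of this is a fatal flaw, but to turn your plan into a proof you would either have to carry out the nine invariant computations explicitly or, more economically, replace your homotopy-based invariant by the paper's purely combinatorial one.
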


If the type of a semi-equivelar map $X$ on the torus is different from $[3^3,4^2]$ then, by Theorem \ref{theo:non-wr-torus}, the vertices of $X$ may form more than one ${\rm Aut}(X)$-orbits. Here we prove

\begin{theorem} \label{theo:torus33434&3636}
Let $X$ be a semi-equivelar map on the torus. Let the vertices of $X$ form $m$ ${\rm Aut}(X)$-orbits. {\rm (a)} If the type of $X$ is  $[3^2,4^1,3^1,4^1]$ then $m\leq 2$.
{\rm (b)} If the type of $X$ is  $[3^1,6^1,3^1,6^1]$ then $m\leq 3$.
\end{theorem}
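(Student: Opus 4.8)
The plan is to realise $X$ as a quotient of the relevant Archimedean tiling and then to read the orbit count off the translation subgroup, corrected by the one point symmetry that survives in \emph{every} such quotient. Write $\tilde X$ for the Archimedean tiling of the given type in $\mathbb{R}^2$ (the snub square tiling $3.3.4.3.4$ in (a), the trihexagonal tiling $3.6.3.6$ in (b)). Since $\tilde X$ is the universal cover of $X$, we may write $X=\tilde X/\Lambda$ with $\Lambda\cong\mathbb{Z}^2$ the deck group. Each deck transformation is an automorphism of $\tilde X$ and, by the rigidity of these tilings, is induced by an isometry of the plane; as the quotient is an orientable closed surface on which $\Lambda$ acts freely, $\Lambda$ must consist of translations, so $\Lambda$ is a finite-index subgroup of the full translation lattice $T$ of $\tilde X$. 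This identification should be available from the analysis already used for Theorems~\ref{theo:GrSh} and~\ref{theo:torus}.

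First I would record which symmetries descend. A symmetry $g$ of $\tilde X$ with linear part $\bar g$ normalises $\Lambda$ exactly when $\bar g(\Lambda)=\Lambda$, and then induces an automorphism of $X$; moreover, for any group $\Lambda\le H\le\mathrm{Aut}(\tilde X)$ of such symmetries, the number of $H/\Lambda$-orbits on $V(X)$ equals the number of $H$-orbits on $V(\tilde X)$. Since extra automorphisms can only merge orbits, $m$ is bounded above by the orbit count of any such $H$. Two families always qualify: all translations (take $H=T$), since $\bar g=I$; and the point reflection $x\mapsto 2c-x$ about any $2$-fold centre $c$ of $\tilde X$, since its linear part $-I$ preserves every lattice. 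Call the latter $\sigma$.

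Next I would count the $T$-orbits of vertices, which is just the number of vertices in a fundamental domain for $T$: this is $4$ for the snub square tiling and $3$ for the trihexagonal tiling. (As a check, with $G=\mathrm{Aut}(\tilde X)$, the point group $G/T$ acts transitively on these $T$-orbits with stabiliser the vertex-figure symmetry $G_v$, so $r\cdot|G_v|=|G/T|$: for $3.3.4.3.4$ one has $|G/T|=8$ and $|G_v|=2$, the figure admitting only one reflection because its cyclic length $5$ is odd, giving $r=4$; for $3.6.3.6$ one has $|G/T|=12$ and $|G_v|=4$, the figure carrying a half-turn and two reflections, giving $r=3$.) For (b) this already finishes the proof, since translations alone yield $m\le r=3$.

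The remaining point, and the crux separating (a) from (b), is how $\sigma$ permutes the $T$-orbits. In the point group $\sigma$ projects to the central involution $z$, which fixes a given $T$-orbit if and only if $z\in G_v$. For the trihexagonal tiling the vertex figure $3.6.3.6$ \emph{is} invariant under a half-turn, so the vertex is a $2$-fold centre, $z\in G_v$, and $\sigma$ fixes all three classes, giving no improvement and matching $m\le 3$. For the snub square tiling the odd length of $3.3.4.3.4$ means the vertex is \emph{not} a $2$-fold centre, so $z\notin G_v$; being central, $z$ then fixes none of the four classes and, as an involution acting freely on four points, pairs them into two. Hence $\langle T,\sigma\rangle$ has exactly two vertex orbits and $m\le 2$, proving (a). I expect the main obstacle to be the structural setup — confirming that $\Lambda$ is a translation sublattice and that $\mathrm{Aut}(X)$ is captured by the symmetries of $\tilde X$ normalising $\Lambda$ — rather than the orbit bookkeeping; a concrete alternative is to fix coordinates for the four (resp.\ three) vertices of a fundamental domain and exhibit $\sigma$ explicitly. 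Finally, it is worth noting that the same mechanism is consistent with Theorem~\ref{theo:torus}: for $[6^3]$ the two translation classes are swapped by $\sigma$, forcing vertex-transitivity.
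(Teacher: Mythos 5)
Your proposal is correct and follows essentially the same route as the paper: realize $X$ as a quotient of the Archimedean tiling by a translation sublattice $\Lambda$, observe that $\Lambda$ is normal in the group generated by the full translation lattice together with the point reflection $x\mapsto -x$ (since $-I$ preserves every lattice), and count vertex orbits of that group — two for the snub square tiling and three (already under translations alone) for the trihexagonal tiling. The only cosmetic difference is that you count the orbits via the point group and vertex stabilizers, whereas the paper exhibits the two $G_2$-orbits $\{u_{i,j}: i+j \text{ odd}\}$ and $\{u_{i,j}: i+j \text{ even}\}$ explicitly.
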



Several examples of $[3^6]$ and $[4^4]$ equivelar maps on the torus are in \cite{DN2001}. From this, one can construct equivelar maps of type $[6^3]$ on the torus. In Example \ref{eg:8maps-torus}, we also present an equivelar map of type $[3^3,4^2]$ on the torus for the sake of completeness.

\section{Proofs of Theorem \ref{theo:GrSh} and Proposition  \ref{prop:plane}} \label{sec:proofs-1}

For $n\geq 3$, the $n$-gon whose edges are $u_1u_2, \dots, u_{n-1}u_{n}, u_{n}u_{1}$ is denoted by $u_1\mbox{-}u_2\mbox{-}\cdots\mbox{-}u_n\mbox{-}u_1$ or by $C_n(u_1, \dots, u_n)$. We call 3-gons and 4-gons by {\em triangles} and {\em quadrangles} respectively. A triangle $u\mbox{-}v\mbox{-}w\mbox{-}u$ is also denoted by $uvw$.
If $X$ is a map on a surface $M$ then we identify a face of $X$ in $M$ with the boundary cycle of the face.

\begin{proof}[Proof of Proposition \ref{prop:plane}]
In \cite{DN2001}, it was shown that there exists equivelar map of type $[3^8]$ on the orientable surface of genus $g$ for each $g\geq 4$. For a fixed $g\geq 4$, let $X$ be
one such equivelar map of type $[3^8]$ on the surface $M_g$ of genus $g$. Since the 2-disk $\mathbb{D}^2$ is the universal cover of $M_g$, by pulling back $X$, we get an
equivelar map $\widetilde{X}$ of type $[3^8]$ on $\mathbb{D}^2$ and a polyhedral map $\eta : \widetilde{X} \to X$. From the constructions in \cite{Da2005} and \cite{DN2001}, we know that an equivelar map  of type $[p^q]$ exists on some surface (orientable or non-orientable) of appropriate genus for each $[p^q]$ in $\{[3^7], [4^5], [4^6], [3^{3\ell-1}], [3^{3\ell}], [k^k] \, : \ell\geq 3, k\geq 5\}$. So, by the same arguments, equivelar maps  of types $[p^q]$ exist on $\mathbb{D}^2$ for $[p^q]$ in $\{[3^7], [4^5], [4^6], [3^{3\ell-1}], [3^{3\ell}], [k^k] \, : \ell\geq 3, k\geq 5\}$. More generally, there exist equivelar maps of type $[p^q]$ on $\mathbb{D}^2$ whenever $1/p+1/q<1/2$ (cf., \cite{CM1957}, \cite{FT1965}, \cite{GS1977}).
Since $\mathbb{R}^2$ is homeomorphic to $\mathbb{D}^2$, an equivelar map of type $[p^q]$ determines an equivelar map of type $[p^q]$ on $\mathbb{R}^2$.
Thus, there exist equivelar maps of types $[p^q]$ on $\mathbb{R}^2$ whenever $1/p+1/q<1/2$. The result now follows.
\end{proof}

\begin{lemma}\label{lemma:21types}
Let $X$ be a semi-equivelar map on a surface $M$. If $\chi(M) = 0$ then the type of $X$ is $[3^6]$, $[3^4,6^1]$, $[3^3,4^2]$, $[3^2,4^1,3^1,4^1]$, $[4^4]$, $[3^1,6^1,3^1, 6^1]$, $[3^2,6^2]$, $[3^2,4^1,12^1]$, $[3^1,4^1,3^1,12^1]$, $[3^1,4^1,6^1,4^1]$, $[3^1,4^2,6^1]$, $[6^3]$, $[3^1,12^2]$, $[4^1,8^2]$, $[5^2,10^1]$,  $[3^1,7^1,42^1]$, \linebreak $[3^1,8^1,24^1]$, $[3^1,9^1,18^1]$, $[3^1,10^1,15^1]$, $[4^1,5^1,20^1]$ or $[4^1,6^1,12^1]$.
\end{lemma}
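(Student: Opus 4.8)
The plan is to reduce the lemma to a finite arithmetic problem by a counting (discrete Gauss--Bonnet) argument, and then to enumerate the solutions. Suppose $X$ has type $[p_1^{n_1},\dots,p_k^{n_k}]$ and write $V,E,F$ for its numbers of vertices, edges and faces. Set $d:=n_1+\cdots+n_k$. Since all face-cycles have the same type, every vertex has degree $d$, whence $2E=dV$. For each integer $p\ge 3$ let $a_p:=\sum_{i:\,p_i=p}n_i$ be the number of $p$-gons around a vertex (so $\sum_p a_p=d$), and let $F_p$ be the number of $p$-gonal faces of $X$. As $X$ is polyhedral, each $p$-gon has $p$ distinct vertices; counting vertex--face incidences in two ways gives $pF_p=a_pV$, so $F=\sum_p F_p=V\sum_p a_p/p$. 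Substituting $E$ and $F$ into $0=\chi(M)=V-E+F$ and dividing by $V$ yields the identity
$$\sum_{i=1}^{k} n_i\Big(\tfrac12-\tfrac1{p_i}\Big)=1,$$
equivalently $\sum_i n_i/p_i=(d-2)/2$. This is exactly the combinatorial counterpart of the requirement that the angles of the surrounding regular polygons fill $2\pi$ at a vertex.

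Since $p_i\ge 3$, each coefficient satisfies $\tfrac16\le \tfrac12-\tfrac1{p_i}<\tfrac12$; summing over the $d$ polygons incident to a vertex forces $d/6\le 1<d/2$, i.e. $3\le d\le 6$. Thus every vertex is surrounded by three, four, five or six faces, and the problem splits into four cases. In each case I would list the incident polygons with multiplicity as $q_1\le\cdots\le q_d$ and solve $\sum_{j=1}^d 1/q_j=(d-2)/2$ over integers $q_j\ge 3$, using $q_1\le 2d/(d-2)$ to make the search finite. This returns six triangles for $d=6$; the multisets $\{3,3,3,3,6\}$ and $\{3,3,3,4,4\}$ for $d=5$; the multisets $\{4,4,4,4\},\{3,3,6,6\},\{3,4,4,6\},\{3,3,4,12\}$ for $d=4$; and the ten triples $\{3,7,42\},\{3,8,24\},\{3,9,18\},\{3,10,15\},\{3,12,12\},\{4,5,20\},\{4,6,12\},\{4,8,8\},\{5,5,10\},\{6,6,6\}$ for $d=3$.

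Because the face-cycle at a vertex records a cyclic arrangement of these polygons, the final step is to list, for each multiset, the inequivalent cyclic sequences up to rotation and reflection. Each $d=3$ multiset and each multiset with distinct entries yields a single type, while $\{3,3,3,4,4\}$, $\{3,3,6,6\}$, $\{3,4,4,6\}$ and $\{3,3,4,12\}$ each yield two types, according to whether the two repeated polygons sit adjacent or separated. Tallying gives $1+3+7+10=21$ types, which are precisely those in the statement.

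The only genuinely delicate point is completeness of the two enumerations. On the arithmetic side the case $d=3$, equivalent to $1/a+1/b+1/c=\tfrac12$, needs the most care and accounts for the ten three-gon types; ordering the variables and using the displayed bound on $q_1$ makes the search exhaustive. On the combinatorial side one must count necklaces up to \emph{both} rotation and reflection, so that, for instance, $\{3,3,6,6\}$ contributes exactly $[3^2,6^2]$ and $[3^1,6^1,3^1,6^1]$ with no omission or repetition. Note that no realizability on $M$ is invoked here, since the lemma restricts only the local type; the subsequent elimination of the ten extraneous types (those that do not actually occur on the torus or Klein bottle) is carried out in Theorem \ref{theo:GrSh}.
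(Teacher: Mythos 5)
Your proof is correct and follows essentially the same route as the paper's: derive $\sum_i n_i(\tfrac12-\tfrac1{p_i})=1$ from $\chi=0$, bound the degree by $3\le d\le 6$, enumerate the multisets case by case, and then pass to cyclic arrangements. The only difference is that you make explicit the final necklace-counting step (17 multisets yielding 21 cyclic types), which the paper leaves implicit in "the lemma follows from the claim"; your tallies there are accurate.
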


\begin{proof}
Let the type of $X$ be $[p_1^{n_1}, \dots, p_k^{n_k}]$.  Consider the $\ell$-tuple  $(q_1^{m_1}, \dots, q_{\ell}^{m_{\ell}})$, where $q_i\neq q_j$ for $i\neq j$, $q_i = p_j$ for some $j$, $m_i = \sum_{p_i=q_j}n_j$ and $(m_1,q_1) > (m_2, q_2) > \cdots >  (m_{\ell}, q_{\ell})$. (Here, $(m, p) > (n, q)$ means either (i) $m>n$ or (ii) $m=n$ and $p < q$.)

\medskip

\noindent {\em Claim.} $(q_1^{m_1}, \dots, q_{\ell}^{m_{\ell}}) = (3^6)$, $(3^4,6^1)$, $(3^3,4^2)$, $(4^4)$, $(3^2,6^2)$, $(3^2,4^1,12^1)$, $(4^2, 3^1,6^1)$, $(6^3)$, $(12^2,3^1)$, $(8^2,4^1)$, $(5^2,10^1)$,  $(3^1,7^1,42^1)$,  $(3^1,8^1,24^1)$, $(3^1,9^1,18^1)$, $(3^1,10^1,15^1)$, $(4^1,5^1,20^1)$ or $(4^1,6^1,12^1)$.

\smallskip

Let $f_0, f_1$ and $f_2$ denote the number of vertices, edges and faces of $X$ respectively. Let $d$ be the degree of each vertex. Then, $d = n_1 + \cdots + n_k =  m_1 + \cdots + m_{\ell}$ and $f_1 = f_0\times d/2$. Clearly, the number of $q_i$-gons is $f_0\times m_i/q_i$. This implies that $f_2 = f_0(m_1/q_1 + \cdots + m_{\ell}/q_{\ell})$. Since $\chi(M)=0$, it follows that $f_0 - f_0(m_1 + \cdots + m_{\ell})/2 + f_0(m_1/q_1 + \cdots + m_{\ell}/q_{\ell}) = 0$ or
\begin{align}\label{eq:first}
  \left(\frac{1}{2} - \frac{1}{q_1}\right)m_1 + \cdots + \left(\frac{1}{2} -
  \frac{1}{q_{\ell}}\right)m_{\ell} = 1.
\end{align}

Since $q_i\geq 3$, it follows that $d\leq 6$. Moreover, if $d=6$ then $\ell=1$ and $q_1=3$. In this case, $(q_1^{m_1}, \dots, q_{\ell}^{m_{\ell}}) = (3^6)$.

Now, assume $d=5$. Then $(m_1, \dots, m_{\ell})= (5), (4,1), (3,2), (3,1,1), (2, 2, 1), (2,1,1, 1)$ or $(1,1,1,1,1)$. It is easy to see that for $(m_1, \dots, m_{\ell})= (5), (3,1,1), (2, 2, 1), (2,1,1, 1)$ or $(1,1,1,1,1)$, Eq.  \eqref{eq:first} has no solution. So, $(m_1, \dots, m_{\ell}) = (4,1)$ or $(3,2)$. In the first case, $(q_1, q_2) = (3,6)$ and in the second case, $(q_1, q_2) = (3,4)$.  Thus, $(q_1^{m_1}, \dots, q_{\ell}^{m_{\ell}}) = (3^4, 6^1)$ or $(3^3, 4^2)$.

Let $d=4$. Then $(m_1, \dots, m_{\ell})= (4), (3,1), (2,2), (2,1,1)$ or $(1,1,1,1)$.
Again, for $(m_1, \dots, m_{\ell})= (3,1)$ or $(1,1,1,1)$, Eq.  \eqref{eq:first} has no solution.
So, $(m_1, \dots, m_{\ell}) = (4)$, $(2,2)$ or $(2,1,1)$. In the first case, $q_1 = 4$,  in the second case, $(q_1, q_2) = (3,6)$ and in the third case, $(q_1, \{q_2, q_3\}) = (3, \{4, 12\})$ or $(4, \{3,6\})$.  Thus, $(q_1^{m_1}, \dots, q_{\ell}^{m_{\ell}}) = (4^4), (3^2, 6^2), (3^2,4^1,12^1)$ or $(4^2, 3^1, 6^1)$.

Finally, assume $d=3$. Then $(m_1, \dots, m_{\ell})= (3), (2,1)$ or $(1,1,1)$.
In the first case, $q_1 = 6$,  in the second case, $(q_1, q_2) = (12,3), (8,4)$ or $(5,10)$ and in the third case, $\{q_1, q_2, q_3\} = \{3, 7, 42\}$, $\{3,8,24\}$, $\{3,9,18\}$, $\{3,10, 15\}$, $\{4,5,20\}$ or $\{4, 6,12\}$.  Thus, $(q_1^{m_1}, \dots, q_{\ell}^{m_{\ell}}) = (6^3)$, $(12^2, 3^1)$, $(8^2,4^1)$, $(5^2, 10^1)$, $(3^1,7^1,42^1)$, $(3^1,8^1,24^1)$, $(3^1,9^1,18^1)$, $(3^1,10^1,15^1)$, $(4^1, 5^1$,  $20^1)$ or $(4^1,6^1,12^1)$. This proves the claim.

The lemma follows from the claim.
\end{proof}

We need the following technical lemma for the proof of Theorem \ref{theo:GrSh}.


\begin{lemma}\label{lemma:3restrictions}
If $[p_1^{n_1}, \dots, p_k^{n_k}]$ satisfies any of the following three properties then $[p_1^{n_1}$, $\dots, p_k^{n_k}]$ can not be the type of any semi-equivelar map on a surface.
\begin{enumerate}[{\rm (i)}]
\item There exists $i$ such that $n_i=2$, $p_i$ is odd and $p_j\neq p_i$ for all $j\neq i$.

\item There exists $i$ such that $n_i=1$, $p_i$ is odd, $p_j\neq p_i$ for all $j\neq i$ and $p_{i-1}\neq p_{i+1}$. (Here, addition in the subscripts are modulo $k$.) 

\item $[p_1^{n_1}, \dots, p_k^{n_k}]$ is of the form $[p^1, q^m, p^1, r^n]$, where $p, q, r$ are distinct and $p$ is odd.

\end{enumerate}

\end{lemma}

\begin{proof}
If possible let there exist a semi-equivelar map $X$ of type $[p_1^{n_1}, \dots, p_k^{n_k}]$ which satisfies (i). Let $A = u_1\mbox{-}u_2\mbox{-}u_3\mbox{-}\cdots\mbox{-}u_{p_i}\mbox{-}u_1$ be a $p_i$-gon.
Let the other face containing $u_{r}u_{r+1}$ be $A_r$ for $1\leq r\leq p_i$. (Addition in the subscripts are modulo $p_i$.) Consider the face-cycle of the vertex $u_1$. Since $p_j\neq p_i$ for all $j\neq i$ and $n_i=2$, it follows that exactly one of $A_1$ and $A_{p_i}$ is a $p_i$-gon. Assume, without loss, that $A_1$ is a $p_i$-gon. Since $u_2$ is in two $p_i$-gons, it follows that $A_2$ is not a $p_i$-gon. Therefore (by considering the vertex $u_3$, as in the case for the vertex $u_1$), $A_3$ is a $p_i$-gon. Continuing this way, we get $A_1, A_3, A_5, \dots $ are $p_i$-gons. Since $p_i$ is odd, it follows that $A_{p_i}$ is a $p_i$-gon. Then we get three $p_i$-gons, namely, $A$, $A_1$ and $A_{p_i}$, through $u_1$. This is a contradiction.

Now, suppose there exists a semi-equivelar map $Y$ of type $[p_1^{n_1}, \dots, p_k^{n_k}]$ which satisfies (ii). Let $B = u_1\mbox{-}u_2\mbox{-}u_3\mbox{-}\cdots \mbox{-} u_{p_i}\mbox{-}u_1$ be a $p_i$-gon. Let the other face containing $u_{r}u_{r+1}$ be $B_r$ for $1\leq r\leq p_i$. Consider the face-cycle of the vertex $u_2$. Since $p_j\neq p_i$ and $n_i=1$, $A$ is the only $p_i$-gon containing $u_2$.
Since $p_{i-1}\neq p_{i+1}$, it follows that one of $B_1$ and $B_{2}$ is a $p_{i-1}$-gon and the other is a $p_{i+1}$-gon. Assume, without loss, that $B_1$ is a $p_{i-1}$-gon and $B_2$ is a $p_{i+1}$-gon. Then, by the same argument as above, $B_1, B_3, B_5, \dots$ are $p_{i-1}$-gons and $B_2, B_4, \dots$ are $p_{i+1}$-gons. Since $p_i$ is odd, it follows that $B_{p_i}$ is a $p_{i-1}$-gon. Then, from the face-cycle of $u_1$, it follows that $p_{i+1}=p_{i-1}$. This contradicts the assumption.

Finally, assume that there exists a semi-equivelar map $Z$ of type $[p^1, q^m, p^1, r^n]$, where $p, q, r$ are distinct and $p$ odd. Let $P$ and $Q$ be two adjacent faces through a vertex $u_1$, where $P$ is a $p$-gon and $Q$ is a $r$-gon. Assume that
$P= u_1\mbox{-}u_2\mbox{-}u_3\mbox{-}\cdots \mbox{-} u_{p}\mbox{-}u_1$ and $Q= u_1\mbox{-}v_2\mbox{-}v_3\mbox{-}\cdots \mbox{-} v_{r-1}\mbox{-}u_{p} \mbox{-} u_1$.
Let the other face containing $u_{j}u_{j+1}$ be $P_j$ for $1\leq j\leq p$. (Addition in the subscripts are modulo $p$.) Since $p, q, r$ are distinct, considering the face-cycle of $u_1$, it follows that $P_1$ is a $q$-gon. Considering the face-cycle of $u_2$, by the similar argument (interchanging $r$ and $q$), it follows that $P_2$ is a $r$-gon. Continuing this way, we get $P_1, P_3, \dots$ are $q$-gons and $P_2, P_4, \dots$ are $r$-gons. Since $p$ is odd, it follows that $P_{p}$ is a $q$-gon. This is a contradiction since $P_{p} = Q$ is a $r$-gon and $r\neq q$. This completes the proof.
\end{proof}

In \cite[Theorem 2.1]{DU2005}, the second author and Upadhyay have proved the following.

\begin{proposition} \label{prop:3^46^1}
There is no semi-equivelar map of type $[3^4, 6^1]$ on the Klein bottle.
\end{proposition}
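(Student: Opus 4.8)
The plan is to pass to the universal cover and reduce the statement to a chirality (orientation) obstruction. Suppose, for contradiction, that $X$ is a semi-equivelar map of type $[3^4,6^1]$ on the Klein bottle $K$. Since $\chi(K)=0$, the universal cover of $K$ is the plane $\mathbb{R}^2$, so pulling $X$ back along the covering $\pi\colon \mathbb{R}^2\to K$ produces a semi-equivelar map $\widetilde{X}$ of type $[3^4,6^1]$ on $\mathbb{R}^2$ together with a free, properly discontinuous action of the deck group $\Gamma\cong\pi_1(K)$ on $\widetilde{X}$ by automorphisms, with $X=\widetilde{X}/\Gamma$.

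First I would pin down $\widetilde{X}$. Because the type $[3^4,6^1]$ is Euclidean (four triangles and one hexagon contribute $4\cdot 60^\circ+120^\circ=360^\circ$), the map $\widetilde{X}$ develops isometrically in the Euclidean plane: give each face its regular shape and unfold. As $\mathbb{R}^2$ is simply connected and the development has no cone points, the holonomy is trivial and the developing map is a global isometry onto the Archimedean snub hexagonal tiling. Thus $\widetilde{X}$ is isomorphic, as a map, to that tiling, and moreover every combinatorial automorphism of $\widetilde{X}$ develops to a Euclidean symmetry of it.

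The decisive point is that the snub hexagonal tiling is \emph{chiral}: its symmetry group is the wallpaper group $p6$, consisting of rotations and translations only, with no reflection or glide reflection. Hence $\mathrm{Aut}(\widetilde{X})\cong p6$ contains no orientation-reversing element. On the other hand, $K$ is non-orientable, so $\Gamma$ must contain an orientation-reversing (glide-reflection-type) element. Since $\Gamma\le\mathrm{Aut}(\widetilde{X})$, this is impossible, and the contradiction proves the proposition. (The contrast with the admissible Klein-bottle types $[3^6]$ and $[4^4]$, whose symmetry groups $p6m$ and $p4m$ do contain reflections, is precisely what this argument isolates.)

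I expect the main obstacle to be the chirality step, namely verifying rigorously that $\widetilde{X}$ admits no orientation-reversing automorphism. The cleanest route is the development argument above, which converts combinatorial automorphisms into isometries and then invokes the absence of reflections in $p6$; a more self-contained alternative is to produce an explicit combinatorial invariant distinguishing the snub tiling from its mirror image — for instance the consistent ``twist'' of the ring of six triangles around each hexagon relative to the orientation of that hexagon's boundary cycle — and to check that this twist is preserved by every automorphism but reversed by any orientation-reversing one. The other place needing care is the uniqueness of $\widetilde{X}$ (that it is genuinely the Archimedean tiling and not some exotic combinatorial semi-equivelar map of the same type), and it is exactly the holonomy/development argument that makes this go through.
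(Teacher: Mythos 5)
A preliminary remark: the paper does not actually prove this proposition --- it is quoted from \cite[Theorem 2.1]{DU2005} --- so there is no in-paper argument to measure yours against. Judged on its own terms, your chirality obstruction is the standard and correct explanation for this non-existence, and the logic is sound: lift to the universal cover, identify the lift with the snub hexagonal tiling, observe that every combinatorial automorphism of that tiling is realized by a Euclidean symmetry and that its symmetry group $p6$ contains no orientation-reversing element, and conclude that the deck group of a Klein-bottle quotient (which must contain an orientation-reversing transformation, else the orientation of $\mathbb{R}^2$ would descend) cannot embed in it. The two places you flag are indeed exactly where the remaining work lies, and neither is a fatal gap so much as an unfinished step. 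For the rigidity step (that an abstract semi-equivelar map of type $[3^4,6^1]$ on $\mathbb{R}^2$ must be the Archimedean tiling), note that this paper establishes the analogous uniqueness statements for the types it needs ($[3^3,4^2]$ in Lemma \ref{lemma:plane33344}, and the types $[3^2,4^1,3^1,4^1]$ and $[3^1,6^1,3^1,6^1]$ in the final section) by purely combinatorial means --- growing the tiling strip by strip and using an Euler-characteristic count on a disk, as in Lemma \ref{lemma:disc33344}, to rule out a path closing up --- and the same template would work for $[3^4,6^1]$; your metric development route is also fine, but to conclude that the developing map is a global isometry you must first note that the induced path metric is complete, which holds here because there are only two isometry types of faces. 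For the chirality step, the combinatorial ``twist'' invariant you describe (the sense in which the ring of triangles winds around each hexagon relative to the orientation of its boundary) is the right way to make the argument self-contained rather than citing the wallpaper-group classification. In short: a correct approach with the delicate points correctly identified, but those two points still need to be written out in full for this to be a complete proof.
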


\begin{proof}[Proof of Theorem \ref{theo:GrSh}]
Let $X$ be a semi-equivelar map of type $[p_1^{n_1}, \dots, p_k^{n_k}]$ on the torus.
By Lemma \ref{lemma:3restrictions} (i), $[p_1^{n_1}, \dots, p_k^{n_k}] \neq [3^2,6^2], [3^2,4^1, 12^1], [5^2,10^1]$. By Lemma \ref{lemma:3restrictions} (ii), $[p_1^{n_1}, \dots,$ $p_k^{n_k}] \neq [3^1,4^2,6^1], [3^1,7^1,42^1], [3^1,8^1,24^1], [3^1,9^1,18^1]$, $[3^1,10^1,15^1]$, $[4^1,5^1,20^1]$. Also, by Lemma \ref{lemma:3restrictions} (iii), $[p_1^{n_1}, \dots, p_k^{n_k}]\neq
[3^1,4^1,3^1,12^1]$. The result now follows by Lemma \ref{lemma:21types}.

Let $X$ be a semi-equivelar map of type $[p_1^{n_1}, \dots, p_k^{n_k}]$ on the Klein bottle. As above, by Lemma \ref{lemma:3restrictions}, $[p_1^{n_1}, \dots, p_k^{n_k}]\neq [3^2,6^2], [3^2,4^1, 12^1], [5^2,10^1], [3^1,4^2,6^1]$, $[3^1,7^1,42^1]$, $[3^1,8^1,24^1]$, $[3^1,9^1,18^1]$, $[3^1,10^1,15^1]$, $[4^1,5^1,20^1]$, $[3^1,4^1,3^1,12^1]$. By Proposition \ref{prop:3^46^1}, $[p_1^{n_1}, \dots, p_k^{n_k}] \neq [3^4,6^1]$.
The result now follows by Lemma \ref{lemma:21types}.
\end{proof}

\section{Proof of Theorem \ref{theo:torus}}\label{sec:proofs-2}

A triangulation of a 2-manifold is called {\em degree-regular} if each of its vertices have the same degree. In other word, a degree-regular triangulation is an equivelar  map of type $[3^k]$ for some $k\geq 3$. The triangulation $E$ given in Fig. 1 is a  degree-regular triangulation of $\mathbb{R}^2$.


\setlength{\unitlength}{3mm}

\begin{picture}(45,16)(-5,-1.5)



\thinlines

\put(0,6){\line(1,0){34}}

\put(2,3){\line(1,0){30}} \put(2,9){\line(1,0){30}}
\put(4,12){\line(1,0){30}}

\put(0,4.5){\line(2,3){5.5}} \put(2,1.5){\line(2,3){7.5}}
\put(6,1.5){\line(2,3){7.5}} \put(10,1.5){\line(2,3){7.5}}
\put(14,1.5){\line(2,3){7.5}} \put(18,1.5){\line(2,3){7.5}}
\put(22,1.5){\line(2,3){7.5}} \put(26,1.5){\line(2,3){7.5}}
\put(30,1.5){\line(2,3){3.5}}

\put(4,1.5){\line(-2,3){3.5}} \put(8,1.5){\line(-2,3){5.5}}
\put(12,1.5){\line(-2,3){7.5}} \put(16,1.5){\line(-2,3){7.5}}
\put(20,1.5){\line(-2,3){7.5}} \put(24,1.5){\line(-2,3){7.5}}
\put(28,1.5){\line(-2,3){7.5}} \put(32,1.5){\line(-2,3){7.5}}
\put(34,4.5){\line(-2,3){5.5}} \put(34,10.5){\line(-2,3){1.5}}



\put(-1,3.4){\mbox{\small $u_{-2,-1}$}}
 \put(4.5,3.6){\mbox{\small $u_{-1,-1}$}}
 \put(11.8,3.4){\mbox{\small $u_{0,-1}$}}
 \put(15.8,3.4){\mbox{\small $u_{1,-1}$}}
 \put(19.8,3.4){\mbox{\small $u_{2,-1}$}}
 \put(23.8,3.4){\mbox{\small $u_{3,-1}$}}
 \put(27.8,3.4){\mbox{\small $u_{4,-1}$}}
 \put(31.8,3.4){\mbox{\small $u_{5,-1}$}}

 \put(-2.5,6.4){\mbox{\small $u_{-2,0}$}}
 \put(5.8,6.4){\mbox{\small $u_{-1,0}$}}
 \put(9.8,6.4){\mbox{\small $u_{0,0}$}}
 \put(13.8,6.4){\mbox{\small $u_{1,0}$}}
 \put(17.8,6.4){\mbox{\small $u_{2,0}$}}
 \put(21.8,6.4){\mbox{\small $u_{3,0}$}}
 \put(25.8,6.4){\mbox{\small $u_{4,0}$}}
 \put(29.8,6.4){\mbox{\small $u_{5,0}$}}
 \put(33.8,6.4){\mbox{\small $u_{6,0}$}}

 \put(-0.5,9.4){\mbox{\small $u_{-2,1}$}}
 \put(7.8,9.4){\mbox{\small $u_{-1,1}$}}
 \put(11.8,9.4){\mbox{\small $u_{0,1}$}}
 \put(15.8,9.4){\mbox{\small $u_{1,1}$}}
 \put(19.8,9.4){\mbox{\small $u_{2,1}$}}
 \put(23.8,9.4){\mbox{\small $u_{3,1}$}}
 \put(27.8,9.4){\mbox{\small $u_{4,1}$}}
 \put(31.8,9.4){\mbox{\small $u_{5,1}$}}

\put(5,13.2){\mbox{\small $u_{-1,2}$}}
 \put(9,13.2){\mbox{\small $u_{0,2}$}}
 \put(13,13.2){\mbox{\small $u_{1,2}$}}
 \put(17,13.2){\mbox{\small $u_{2,2}$}}
 \put(21,13.2){\mbox{\small $u_{3,2}$}}
 \put(25,13.2){\mbox{\small $u_{4,2}$}}
 \put(29,13.2){\mbox{\small $u_{5,2}$}}
 \put(33,13.2){\mbox{\small $u_{6,2}$}}


\put(5,-0.5){\mbox{{\bf Figure 1:} Regular $[3^6]$-tiling $E$ of $\mathbb{R}^2$}}

\end{picture}

From \cite{DU2005} we know

\begin{proposition}\label{prop:DU}
Let $M$ be a triangulation of the
plane $\mathbb{R}^2$. If the degree of each vertex of $M$ is $6$ then $M$
is isomorphic to $E$.
\end{proposition}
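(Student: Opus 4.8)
The plan is to construct an explicit simplicial isomorphism $\phi \colon M \to E$ by fixing the image of one triangle and then propagating the labelling outward, the point being that the degree-$6$ hypothesis forces the local structure at every vertex to coincide rigidly with that of $E$. First I would record the elementary local fact: since $M$ is a triangulation in which every vertex has degree $6$, the link $\lk{M}{v}$ of each vertex $v$ is a $6$-cycle and every edge lies in exactly two triangles. The vertices of $E$ are $\{u_{i,j} : (i,j)\in \mathbb{Z}^2\}$, and in $E$ the six neighbours of $u_{0,0}$, read in cyclic order around it, are $u_{1,0}, u_{0,1}, u_{-1,1}, u_{-1,0}, u_{0,-1}, u_{1,-1}$. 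Writing $e_1 = (1,0)$, $e_2 = (0,1)$, $e_3 = (-1,1)$, $e_4 = (-1,0)$, $e_5 = (0,-1)$, $e_6 = (1,-1)$ (so $e_{r+3} = -e_r$), the link of $u_{i,j}$ is the hexagon $u_{(i,j)+e_1}$-$\cdots$-$u_{(i,j)+e_6}$-$u_{(i,j)+e_1}$, and every triangle of $E$ has the form $u_p$-$u_{p+e_r}$-$u_{p+e_{r+1}}$. Thus once a vertex of $M$ is assigned a coordinate $p\in\mathbb{Z}^2$ together with a cyclic identification of its link with $(e_1,\dots,e_6)$, the coordinates of all its neighbours are forced.

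Next I would set up the propagation. Pick a triangle $v_0 v_1 v_2$ of $M$ and declare $\phi(v_0) = u_{0,0}$, $\phi(v_1) = u_{1,0}$, $\phi(v_2) = u_{0,1}$ (these are the vertices of a face of $E$). The key device is a propagation rule across edges: if a triangle $xyz$ of $M$ has already received coordinates matching a triangle of $E$, then for each of its edges, say $xy$, the unique other triangle $xyw$ of $M$ across $xy$ must be sent to the unique other triangle of $E$ across the edge $\phi(x)\phi(y)$, which determines $\phi(w)$. I would then show, by induction on the combinatorial distance from $v_0$, that repeated application of this rule assigns a coordinate to every vertex of $M$: the degree-$6$ condition guarantees that as one turns around any vertex $x$ whose coordinate is already fixed, the six successive applications of the rule run exactly once around the hexagonal link of $\phi(x)$ and close up, so the neighbours of $x$ receive precisely the coordinates $\phi(x)+e_1,\dots,\phi(x)+e_6$ in cyclic order.

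The main obstacle, and the heart of the argument, is \emph{consistency}: I must check that when a vertex $w$ is reached along two different chains of edge-flips the two resulting coordinates agree, so that $\phi$ is well defined. This is exactly where both hypotheses enter decisively. The absence of monodromy reduces, since $M$ realizes the simply connected surface $\mathbb{R}^2$, to checking that the rule is consistent around each single vertex; and around a vertex this is precisely the statement, just verified, that its link is a hexagon closing up after six triangles in lock-step with $E$. Put differently, $\phi$ is a simplicial map that restricts to an isomorphism $\lk{M}{v} \to \lk{E}{\phi(v)}$ on every link, hence a simplicial local isomorphism between triangulations of the plane; by the usual path-lifting argument such a map is a covering, and a covering of the simply connected $|E|\cong\mathbb{R}^2$ with connected domain is a homeomorphism.

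Finally I would harvest bijectivity on vertex sets. Surjectivity onto $V(E)=\mathbb{Z}^2$ follows because the image is closed under passing to neighbours and $E$ is connected, so every $u_{i,j}$ is reached; injectivity follows from $\phi$ being a homeomorphism, so no two vertices of $M$ can map to the same $u_{i,j}$. Since $\phi$ also carries faces of $M$ bijectively to faces of $E$ by construction, it is a simplicial isomorphism, and therefore $M \cong E$, as claimed. The only delicate point to write out carefully is the consistency/no-monodromy step; everything else is bookkeeping with the vectors $e_1,\dots,e_6$.
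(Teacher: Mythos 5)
The paper does not actually prove Proposition \ref{prop:DU}: it is imported verbatim from \cite{DU2005} (``From \cite{DU2005} we know\dots''), so there is no in-paper argument to compare yours against. Judged on its own, your proof is correct and is the standard ``developing map'' argument, which is also in essence how the cited source establishes the result: fix one triangle, propagate coordinates across edges, and use degree $6$ plus simple connectivity to kill the monodromy. The two points you rightly single out as delicate are exactly the ones that must be written out. First, the reduction of global consistency to consistency around a single vertex needs the statement that any two dual paths of triangles from $T_0$ to $T$ in $M$ differ by a sequence of elementary moves around vertices; this holds because $|M|\cong\mathbb{R}^2$ is simply connected, so every closed edge-path in the dual graph is a product of boundaries of the dual $2$-cells, and the vertex computation you do (six flips around a degree-$6$ vertex advance the label by $e_r\mapsto e_{r+1}$ six times and return to the start) is precisely where the hypothesis ``degree $=6$'' is consumed. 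Second, the claim that a simplicial local isomorphism between triangulations of the plane is a covering requires observing that the open stars of two distinct vertices in $\phi^{-1}(u)$ are disjoint (a simplex containing both would map non-injectively onto a simplex of $E$, contradicting that $\phi$ is an isomorphism on stars); with that, the preimage of each open star of $E$ is evenly covered and the covering-of-a-simply-connected-space argument finishes injectivity and surjectivity. One small bookkeeping caveat: with the paper's explicit coordinates $u_{i,2j}=(i,j\sqrt 3)$, $u_{i,2j+1}=(i+1/2,(2j+1)\sqrt3/2)$, the adjacency of $E$ is not translation-invariant in the index $(i,j)$ (the lower neighbours of $u_{0,0}$ are $u_{-1,-1}$ and $u_{0,-1}$, not $u_{0,-1}$ and $u_{1,-1}$), so your target is really the abstract triangular lattice on $\mathbb{Z}^2$ with neighbour set $\{\pm(1,0),\pm(0,1),\pm(-1,1)\}$; that lattice is isomorphic to $E$ by an explicit row-dependent relabelling, so this is a notational adjustment rather than a gap.
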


Using Proposition \ref{prop:DU}, it was shown in \cite{DU2005} that `any degree-regular
triangulation of the torus is vertex-transitive'. Here we prove

\begin{lemma} \label{lemma:face_transitive}
Let $X$ be a triangulation of the torus. If $X$ is degree-regular then the automorphism group ${\rm Aut}(X)$ acts face-transitively on $X$.
\end{lemma}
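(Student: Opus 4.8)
The plan is to exploit the already-established fact that a degree-regular triangulation of the torus is vertex-transitive, together with the universal-cover description from Proposition~\ref{prop:DU}. Since every vertex of $X$ has degree $6$, lifting $X$ to its universal cover $\mathbb{R}^2$ produces a degree-regular triangulation of the plane in which every vertex has degree $6$; by Proposition~\ref{prop:DU} this lift is isomorphic to the standard tiling $E$ of Figure~1. Thus $X = E/\Gamma$ for some group $\Gamma$ of deck transformations acting freely and properly discontinuously on $E$, where $\Gamma$ is a rank-$2$ lattice of translations (since the torus is orientable and the quotient is a torus). The key observation is that $\mathrm{Aut}(E)$ acts transitively not just on vertices but on the \emph{ordered flags} (vertex, incident edge, incident face) of $E$: this is the full symmetry group of the regular triangular tiling, the wallpaper group $p6m$, which is flag-transitive. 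My intention is to push this flag-transitivity down to the quotient.

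First I would set up the covering map $\eta : E \to X$ and identify $X$ with $E/\Gamma$. Next I would recall (or verify directly from the explicit coordinates $u_{i,j}$ in Figure~1) that $\mathrm{Aut}(E)$ acts transitively on the faces of $E$, indeed on flags; in the triangular tiling any triangle can be carried to any other by a symmetry, and the stabilizer of a triangle acts transitively on its three edges and vertices. Then the central step is to show that a symmetry of $E$ carrying one face to another, when composed appropriately, descends to an automorphism of $X$. For this I need: given two faces $\sigma, \tau$ of $X$, lift them to faces $\widetilde{\sigma}, \widetilde{\tau}$ of $E$, choose $g \in \mathrm{Aut}(E)$ with $g(\widetilde{\sigma}) = \widetilde{\tau}$, and argue that $g$ normalizes $\Gamma$ so that it induces a well-defined automorphism $\bar{g}$ of $X = E/\Gamma$ with $\bar{g}(\sigma) = \tau$.

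The main obstacle, and the step that needs real care, is precisely this normalization: an arbitrary element $g \in \mathrm{Aut}(E)$ need not satisfy $g \Gamma g^{-1} = \Gamma$, and hence need not descend to $X$. The way I would handle this is to use the already-proved vertex-transitivity of $X$ to reduce the problem. Concretely, since $X$ is vertex-transitive, $\mathrm{Aut}(X)$ already moves any vertex to any other; so it suffices to show that the stabilizer in $\mathrm{Aut}(X)$ of a fixed vertex $v$ acts transitively on the six faces incident to $v$ (equivalently, on the edges at $v$). For this local statement I would lift to $E$, where the stabilizer of the corresponding vertex $\widetilde{v}$ in $\mathrm{Aut}(E)$ is the dihedral group of order $12$ acting as the full symmetry group of the hexagonal link of $\widetilde{v}$, and in particular acting transitively on the six faces at $\widetilde{v}$. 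The point is that these link symmetries are \emph{rotations and reflections fixing $\widetilde{v}$}, and one checks that such finite-order symmetries fixing a lattice point do normalize the translation lattice $\Gamma$ (a point symmetry of the plane that preserves the hexagonal vertex-configuration automatically preserves the lattice it generates), and therefore descend to automorphisms of $X$ fixing $v$ and permuting its incident faces transitively.

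Combining vertex-transitivity with this transitivity of the vertex-stabilizer on incident faces gives face-transitivity: any face $\tau$ of $X$ contains some vertex, which can be sent to $v$ by an automorphism, and then $\tau$ is sent to a face at $v$, which can be sent to any prescribed face at $v$ by a stabilizing automorphism. I would phrase the final assembly as: $\mathrm{Aut}(X)$ acts transitively on the set of incident pairs (vertex, face), hence in particular transitively on faces. I expect the normalization argument for the point-stabilizer symmetries to be the only delicate point; everything else is bookkeeping with the explicit tiling $E$ and the covering $\eta$.
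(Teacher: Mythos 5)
There is a genuine gap at exactly the step you flag as delicate, and it cannot be repaired within your reduction. The stabilizer of a vertex $\widetilde{v}$ in ${\rm Aut}(E)$ is dihedral of order $12$, and each of its elements preserves the full vertex lattice $\Lambda$ (this is what your parenthetical remark justifies), but that is not what you need: you need these elements to normalize the deck group $\Gamma$, which is an arbitrary finite-index sublattice of $\Lambda$, and a $60^{\circ}$ rotation or a reflection does not preserve a general sublattice. Concretely, with $a=u_{1,0}-u_{0,0}$, $b=u_{0,1}-u_{0,0}$ and $\Gamma=\langle 2a,\,3b\rangle$, the order-$6$ rotation about $u_{0,0}$ sends $a\mapsto b$, hence $2a\mapsto 2b\notin\Gamma$; the reflection fixing $a$ sends $b\mapsto a-b$, hence $3b\mapsto 3a-3b\notin\Gamma$. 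So these symmetries do not descend to $X$. Worse, the statement you aim to assemble at the end --- that ${\rm Aut}(X)$ is transitive on incident (vertex, face) pairs --- is false in general: an automorphism of $X$ lifts to an element of ${\rm Aut}(E)$ normalizing $\Gamma$, and for a generic $\Gamma$ this normalizer is only $\Lambda\rtimes\{\pm1\}$, so $|{\rm Aut}(X)|=2f_0(X)$ while there are $6f_0(X)$ incident pairs. The only part of the point stabilizer that normalizes \emph{every} sublattice is $\{\pm1\}$, and $x\mapsto -x$ has three orbits on the six faces at $\widetilde{v}$, so ``vertex-transitivity plus transitivity of the vertex stabilizer on incident faces'' is a dead end.

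The route that works (and is the paper's) keeps the group $G=H\rtimes\{\pm1\}$, with $H$ the full translation group, and shows directly that $G$ is already face-transitive on $E$: the six faces at $u_{0,0}$ are identified with one another using translations that \emph{move} $u_{0,0}$ (e.g. the translation by $a$ carries $u_{-1,0}u_{0,0}u_{-1,1}$ to $u_{0,0}u_{1,0}u_{0,1}$) together with $x\mapsto -x$, which swaps the two orientation classes of triangles. One then checks that every subgroup $K\leq H$ is normal in $G$, since conjugating the translation by $b$ by $x\mapsto\varepsilon x+a$ gives the translation by $\varepsilon b$. Hence $\Gamma\unlhd G$, the whole of $G/\Gamma$ acts on $X=E/\Gamma$, and face-transitivity descends. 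Your setup (universal cover, $X=E/\Gamma$, $\Gamma$ a lattice of translations) is correct; the error is only in which symmetries of $E$ you try to push down to the quotient.
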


\begin{proof}
Since $X$ is degree-regular and Euler characteristic of $X$ is 0, it follows that the degree of each vertex in $X$ is 6.

Since $\mathbb{R}^2$ is the universal cover of the torus, there exists a triangulation $Y$ of $\mathbb{R}^2$ and a simplicial covering map $\eta \colon Y \to X$ (cf.
\cite[Page 144]{Sp1966}). Since the degree of each vertex in $X$ is 6, the degree of each vertex in $Y$ is 6. Because of Proposition \ref{prop:DU}, we may assume that $Y = E$. Let $\Gamma$ be the group of covering transformations. Then $|X| = |E|/\Gamma$.

We take $V= \{u_{i,2j}=(i,j\sqrt{3}), \, u_{i,2j+1} = (i+1/2, (2j+1)\sqrt{3}/2)\, : \, i, j \in \mathbb{Z}\}$ as the vertex set of $E$. Then $H := \{x\mapsto x+a, a \in V\}$ is a subgroup of ${\rm Aut}(E)$ and is called the group of translations. Clearly, $H$ is commutative.

 For $\sigma\in \Gamma$, $\eta \circ \sigma = \eta$.
So, $\sigma$ maps the geometric carrier of a simplex to the
geometric carrier of a simplex. This implies that $\sigma$
induces an automorphism $\sigma$ of $E$. Thus, we can identify
$\Gamma$ with a subgroup of ${\rm Aut}(E)$. So, $X$ is a quotient
of $E$ by the subgroup $\Gamma$ of ${\rm Aut}(E)$, where $\Gamma$
has no fixed element (vertex, edge or face). Hence $\Gamma$
consists of translations and glide reflections. Since $X =
E/\Gamma$ is orientable, $\Gamma$ does not contain any glide
reflection. Thus $\Gamma \leq H$.

Consider the subgroup $G$ of ${\rm Aut}(E)$ generated by $H$ and the map $x\mapsto -x$. So,
\begin{align*}
  G & =\{\alpha : x \mapsto \varepsilon x + a \, : \, \varepsilon = \pm 1, a \in V\} \cong H\rtimes \mathbb{Z}_2.
\end{align*}

\smallskip

\noindent {\em Claim 1.} $G$ acts face-transitively on $E$.

\smallskip

Since $H$ is vertex transitively on $E$, to prove Claim 1, it is sufficient to show that $G$ acts transitively on the set of six faces containing $u_{0,0}$. This follows from the following: $u_{-1,0}u_{0,0}u_{-1,1} + u_{1, 0} =  u_{0,0}u_{1,0}u_{0,1} =
u_{-1,-1}u_{0,-1}u_{0,0} + u_{0, 1}$, $u_{-1,0}u_{-1,-1}u_{0,0} + u_{1, 0} =  u_{0,0}u_{0,-1}u_{1,0} = u_{-1,1}u_{0,0}u_{0,1} + u_{0, -1}$ and $-1\cdot  u_{0,0}u_{-1,0}u_{-1,-1} = u_{0,0}u_{1,0}u_{0,1}$.

\medskip

\noindent {\em Claim 2.} If $K \leq H$ then $K \unlhd G$.

\smallskip

Let $\alpha \in G$ and $\beta\in K$. Assume $\alpha(x) = \varepsilon x+a$ and $\beta(x) = x + b$ for some $a, b\in V(E)$ and $\varepsilon\in\{1, -1\}$.
Then $(\alpha\circ \beta\circ\alpha^{-1})(x) = (\alpha\circ \beta)(\varepsilon(x-a)) = \alpha(\varepsilon(x-a)+b)=x-a+\varepsilon b+a= x+\varepsilon b = \beta^{\varepsilon}(x)$. Thus, $\alpha\circ \beta\circ\alpha^{-1} = \beta^{\varepsilon}\in K$. This proves Claim 2.

\smallskip

By Claim 2, $\Gamma\unlhd G$ and hence we can assume that $G/\Gamma\leq {\rm Aut}(E/\Gamma)$. Since, by Claim 1, $G$ acts face-transitively on $E$, it follows that $G/\Gamma$ acts face-transitively on $E/\Gamma$.
This completes the proof since $X = E/\Gamma$.
\end{proof}

We need the following two lemmas for the proof of Theorem \ref{theo:torus}.

\begin{lemma} \label{lemma:disc33344}
Let $X$ be a map on the $2$-disk $\mathbb{D}^2$ whose faces are triangles and quadrangles. For a vertex $x$ of $X$, let $n_3(x)$ and $n_4(x)$ be the number of triangles and quadrangles through $x$ respectively. Suppose $(n_3(u), n_4(u)) = (3, 2)$ for each internal vertex $u$. Then $X$ does not satisfy any of the following. \begin{enumerate}[{\rm (a)}]
\item $1\leq n_4(w)\leq 2$, $n_3(w)+n_4(w)\leq 4$ for one vertex $w$ on the boundary, and $(n_3(v), n_4(v))$ $= (0, 2)$ for each boundary vertex $v\neq w$.

\item $1\leq n_3(w)\leq 3$, $n_4(w)\leq 2$ and $n_3(w)+n_4(w)\leq 4$ for one vertex $w$ on the boundary, and $(n_3(v), n_4(v))= (3, 0)$ for each boundary vertex $v\neq w$.

\end{enumerate}
\end{lemma}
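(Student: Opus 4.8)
The plan is to run a combinatorial Gauss--Bonnet argument (equivalently, a weighted Euler count) on the disk $X$. I would assign to every corner of a triangular face the weight $\pi/3$ and to every corner of a quadrangular face the weight $\pi/2$; these are the interior angles of the regular realizations, and the point is only that the corner weights of a $p$-gon sum to the Euclidean value $(p-2)\pi$. For a vertex $x$ set $\theta(x) = n_3(x)\cdot\tfrac{\pi}{3} + n_4(x)\cdot\tfrac{\pi}{2}$. Since $X$ is a map on $\mathbb{D}^2$, its boundary is a single cycle, so $V_b = E_b$, and writing $p_f$ for the number of sides of a face $f$ one has $\sum_f p_f = 2E - E_b$. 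Substituting $\sum_x \theta(x) = \pi\sum_f p_f - 2\pi F = \pi(2E - E_b) - 2\pi F$ into $V-E+F = \chi(\mathbb{D}^2) = 1$ then yields the purely combinatorial identity
\[
\sum_{x\ \mathrm{interior}}\big(2\pi-\theta(x)\big) \;+\; \sum_{x\ \mathrm{boundary}}\big(\pi-\theta(x)\big) \;=\; 2\pi .
\]
No geometric realization of the faces is needed; this is just $V-E+F=1$ in disguise.

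Next I would feed in the hypothesis on internal vertices. Because $(n_3(u),n_4(u))=(3,2)$ gives $\theta(u)=3\cdot\tfrac{\pi}{3}+2\cdot\tfrac{\pi}{2}=2\pi$, every interior summand $2\pi-\theta(u)$ vanishes, and the identity collapses to a statement about the boundary. Dividing by $\pi$ and writing $c(x)=1-\tfrac{n_3(x)}{3}-\tfrac{n_4(x)}{2}$ for the normalized boundary charge, I obtain $\sum_{x\ \mathrm{boundary}} c(x)=2$. Geometrically this is the familiar fact that the total exterior-angle turning around the boundary of a disk equals $2\pi$.

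The final step is a short case analysis, short precisely because almost every boundary vertex is ``straight''. In case (a), each boundary vertex $v\neq w$ has $(n_3(v),n_4(v))=(0,2)$, so $c(v)=1-0-\tfrac{2}{2}=0$; in case (b), each such $v$ has $(n_3(v),n_4(v))=(3,0)$, so $c(v)=1-\tfrac{3}{3}-0=0$. Hence in either case all of the charge must sit at $w$, i.e.\ $c(w)=2$. But this is impossible under the stated constraints: in case (a) the bound $n_4(w)\geq 1$ forces $c(w)\leq 1-\tfrac12=\tfrac12$, and in case (b) the bound $n_3(w)\geq 1$ forces $c(w)\leq 1-\tfrac13=\tfrac23$; in both cases $c(w)<2$, contradicting $\sum_x c(x)=2$. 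Thus neither (a) nor (b) can hold.

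The step I expect to be the only real obstacle is the clean derivation of the weighted Euler identity — in particular getting the edge bookkeeping right ($\sum_f p_f = 2E_{\mathrm{int}}+E_b$ together with $V_b=E_b$ for a disk), so that the weighted sum telescopes exactly to $2\pi\chi$. Once that identity is in hand, the flatness of the interior vertices and the two-line bound on $c(w)$ close the argument at once. It is worth noting that the upper-bound hypotheses on $w$ (namely $n_4(w)\leq 2$, $n_3(w)\leq 3$, and $n_3(w)+n_4(w)\leq 4$) are not actually used here; only $n_4(w)\geq1$ in (a) and $n_3(w)\geq1$ in (b) are needed. This suggests the lemma is phrased with the stronger hypotheses simply because those are the configurations supplied by the application in Theorem \ref{theo:torus}.
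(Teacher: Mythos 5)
Your argument is correct. The discrete Gauss--Bonnet identity you set up is valid: with corner weights $\pi/3$ and $\pi/2$ one has $\sum_x\theta(x)=\pi\sum_f p_f-2\pi F=\pi(2E-E_b)-2\pi F$, and since the boundary of a polyhedral map on $\mathbb{D}^2$ is a single cycle ($V_b=E_b$), the weighted sum does telescope to $2\pi(V-E+F)=2\pi$. The hypothesis on internal vertices makes every interior term vanish, each non-exceptional boundary vertex carries zero charge in both cases (a) and (b), and $c(w)=1-n_3(w)/3-n_4(w)/2\leq 1<2$ kills the remaining possibility. Your closing observation is also accurate: the upper bounds on $n_3(w)$, $n_4(w)$ play no role, and in fact no hypothesis on $w$ at all is needed for the contradiction.

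This is a genuinely different organization from the paper's proof, though both are at bottom the Euler relation $f_0-f_1+f_2=1$. The paper counts $n_3$, $n_4$ and $f_1$ globally in terms of the number $n$ of interior and $m+1$ of boundary vertices; it must first invoke the integrality of $n_3$ and $n_4$ to pin down $(n_3(w),n_4(w))$ (forcing $(0,2)$ in case (a) and $(3,0)$ in case (b), so that $w$ behaves like every other boundary vertex and the edge count closes), and only then computes $f_0-f_1+f_2=0\neq 1$. Your charge formulation localizes the entire discrepancy at the single vertex $w$ and dispenses with the divisibility step and with the upper-bound hypotheses entirely; the trade-off is that you must justify the weighted Euler identity once (including the bookkeeping $\sum_f p_f=2E-E_b$ and $V_b=E_b$), after which both parts of the lemma, and the analogous Lemmas 5.1 and 5.2 later in the paper, follow by the same two-line computation. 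The paper's version is more elementary but must redo the count separately in each case.
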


\begin{proof}
Let $f_0, f_1$ and $f_2$ denote the number of vertices, edges and faces of $X$ respectively. Let $n_3$ (resp., $n_4$) denote the total number of triangles (resp., quadrangles) in $X$. Let there be $n$ internal vertices and $m+1$ boundary vertices. So, $f_0=n+m+1$ and $f_2=n_3+n_4$.

Suppose $X$ satisfies (a). Then $n_4=(2n+2m+n_4(w))/4$ and $n_3= (3n+n_3(w))/3$.
Since $1\leq n_4(w) \leq 2$, it follows that $n_4(w)=2$ and hence $n_3(w)\leq 2$. These imply that $n_3(w)=0$. Thus, the exceptional vertex is like other boundary vertices. Therefore, each boundary vertex is in three edges and hence $f_1= (5n+3m+3)/2$. These imply $f_0-f_1+f_2 = (n+m+1) - (5n+3m+3)/2+ (n+(n+m+1)/2) = 0$. This is not possible since the Euler characteristic of the 2-disk $\mathbb{D}^2$ is 1.

If $X$ satisfies (b) then $n_3=(3n + 3m+ n_3(w))/3$ and $n_4= (2n+n_4(w))/4$.
Since $1\leq n_3(w) \leq 3$, it follows that $n_3(w)=3$ and hence $n_4(w)\leq 1$. These imply that $n_4(w)=0$. Thus, the exceptional vertex is like other boundary vertices and each boundary vertex is in four edges. Thus, $f_1= (5n+4m+4)/2$ and
$f_2= n_4+n_3=3n/2+m+1$. Then $f_0-f_1+f_2= (n+m+1) - (5n+4m+4)/2 + (3n/2+m+1)=0$, a contradiction again. This completes the proof.
\end{proof}

\vspace{-3mm}


\begin{figure}[ht]
\tiny
\tikzstyle{ver}=[]
\tikzstyle{vert}=[circle, draw, fill=black!100, inner sep=0pt, minimum width=4pt]
\tikzstyle{vertex}=[circle, draw, fill=black!00, inner sep=0pt, minimum width=4pt]
\tikzstyle{edge} = [draw,thick,-]
\centering

\begin{tikzpicture}[scale=0.2]
\begin{scope}[shift={(-50,13)}]

\draw[edge, thin](7.5,5)--(37.5,5);
\draw[edge, thin](7.5,10)--(37.5,10);
\draw[edge, thin](7.5,15)--(37.5,15);
\draw[edge, thin](7.5,20)--(37.5,20);
\draw[edge, thin](7.5,25)--(37.5,25);

\draw[edge, thin](10,5)--(12.5,10);
\draw[edge, thin](12.5,10)--(15,5);
\draw[edge, thin](15,5)--(17.5,10);
\draw[edge, thin](17.5,10)--(20,5);

\draw[edge, thin](20,5)--(22.5,10);
\draw[edge, thin](22.5,10)--(25,5);
\draw[edge, thin](25,5)--(27.5,10);
\draw[edge, thin](27.5,10)--(30,5);

\draw[edge, thin](30,5)--(32.5,10);
\draw[edge, thin](32.5,10)--(35,5);

\draw[edge, thin](12.5,10)--(12.5,15);
\draw[edge, thin](17.5,10)--(17.5,15);
\draw[edge, thin](22.5,10)--(22.5,15);
\draw[edge, thin](27.5,10)--(27.5,15);
\draw[edge, thin](32.5,10)--(32.5,15);

\draw[edge, thin](12.5,15)--(10,20);
\draw[edge, thin](15,20)--(12.5,15);
\draw[edge, thin](17.5,15)--(15,20);
\draw[edge, thin](20,20)--(17.5,15);
\draw[edge, thin](22.5,15)--(20,20);
\draw[edge, thin](25,20)--(22.5,15);
\draw[edge, thin](27.5,15)--(25,20);
\draw[edge, thin](30,20)--(27.5,15);
\draw[edge, thin](32.5,15)--(30,20);
\draw[edge, thin](35,20)--(32.5,15);

\draw[edge, thin](10,20)--(10,25);
\draw[edge, thin](15,20)--(15,25);
\draw[edge, thin](20,20)--(20,25);
\draw[edge, thin](25,20)--(25,25);
\draw[edge, thin](30,20)--(30,25);
\draw[edge, thin](35,20)--(35,25);

\node[ver] () at (21,4){\scriptsize $u_{-1,-1}$};
\node[ver] () at (26,4){\scriptsize $u_{0,-1}$};
\node[ver] () at (31,4){\scriptsize $u_{1,-1}$};
\node[ver] () at (35.8,4){\scriptsize $u_{2,-1}$};
\node[ver] () at (16.3,4){\scriptsize $u_{-2,-1}$};
\node[ver] () at (11.3,4){\scriptsize $u_{-3,-1}$};

\node[ver] () at (24.5,10.5){\scriptsize $u_{0,0}$};
\node[ver] () at (29.5,10.5){\scriptsize $u_{1,0}$};
\node[ver] () at (34.5,10.5){\scriptsize $u_{2,0}$};
\node[ver] () at (20,10.5){\scriptsize $u_{-1,0}$};
\node[ver] () at (15,10.5){\scriptsize $u_{-2,0}$};

\node[ver] () at (24.5,14){\scriptsize $u_{0,1}$};
\node[ver] () at (29.5,14){\scriptsize $u_{1,1}$};
\node[ver] () at (34.5,14){\scriptsize $u_{2,1}$};
\node[ver] () at (20,14){\scriptsize $u_{-1,1}$};
\node[ver] () at (15,14){\scriptsize $u_{-2,1}$};

\node[ver] () at (22,20.5){\scriptsize $u_{0,2}$};
\node[ver] () at (26.5,20.5){\scriptsize $u_{1,2}$};
\node[ver] () at (31.5,20.5){\scriptsize $u_{2,2}$};
\node[ver] () at (36.5,20.5){\scriptsize $u_{3,2}$};
\node[ver] () at (17,20.5){\scriptsize $u_{-1,2}$};
\node[ver] () at (12,20.5){\scriptsize $u_{-2,2}$};

\node[ver] () at (21,25.7){\scriptsize $u_{0,3}$};
\node[ver] () at (26,25.7){\scriptsize $u_{1,3}$};
\node[ver] () at (31,25.7){\scriptsize $u_{2,3}$};
\node[ver] () at (36,25.7){\scriptsize $u_{3,3}$};
\node[ver] () at (16,25.7){\scriptsize $u_{-1,3}$};
\node[ver] () at (10.5,25.7){\scriptsize $u_{-2,3}$};

\node[ver] () at (23, 0){\normalsize {\bf Figure 2:} Elongated triangular tiling $E_1$, };

\end{scope}
\end{tikzpicture}
\end{figure}

\vspace{-3mm}


\begin{lemma} \label{lemma:plane33344}
Let $E_1$ be the Archimedean tiling of the plane $\mathbb{R}^2$ given in Figure $2$.
If $X$ is a semi-equivelar map of $\mathbb{R}^2$ of type $[3^{3},4^{2}]$ then $X \cong E_1$.
\end{lemma}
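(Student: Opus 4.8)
The plan is to exploit the rigidity of the local pattern $[3^3,4^2]$ and to propagate it across the simply connected plane, organizing the faces into alternating bands of squares and triangles so as to recognize $X$ as a copy of $E_1$. Throughout I write $T$ for a triangle and $Q$ for a quadrangle, and I classify each edge of $X$ by the (unordered) pair of face-types meeting along it, so that an edge is of type $TT$, $TQ$ or $QQ$.

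First I would read off the local structure at a vertex. Since the face-cycle at every vertex $v$ is of type $[3^3,4^2]$, the five faces around $v$ occur cyclically as $T,T,T,Q,Q$; hence the five edges at $v$ consist of exactly two $TT$ edges (inside the triangle block), two $TQ$ edges (the two transitions), and one $QQ$ edge (between the two quadrangles). In particular every vertex lies on a \emph{unique} $QQ$ edge. Applying this to the four corners of a single quadrangle $Q=a\mbox{-}b\mbox{-}c\mbox{-}d\mbox{-}a$ forces the $QQ$ edges of $Q$ to be a pair of opposite sides and the $TQ$ edges the other pair: if $ab$ is the $QQ$ edge at $a$ then, since each of $b,c,d$ carries a unique $QQ$ edge, one deduces successively that $bc$ is $TQ$, $cd$ is $QQ$, and $da$ is $TQ$.

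Consequently the quadrangles glue to one another along $QQ$ edges into maximal \emph{square bands}: each square is attached along its two opposite $QQ$ edges to two neighbouring squares, so every connected component of the quadrangle adjacency via $QQ$ edges is either a bi-infinite strip $\cdots Q_{-1}Q_0Q_1\cdots$ or a finite cycle. A finite cycle would produce an annular band bounding a disk region of $X$ all of whose interior vertices are of type $(n_3,n_4)=(3,2)$, which I would exclude by an Euler-characteristic count in the spirit of Lemma \ref{lemma:disc33344}; hence every band is a bi-infinite strip. The two sides of such a strip are bi-infinite paths of $TQ$ edges, and at each top vertex $p$ of the strip the square-side lies inside the band while the three triangles of $p$ lie on the outside. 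These outside triangles assemble into a \emph{triangle band} adjacent to the strip, whose opposite boundary is the near side of the next square band; this forces $\mathbb{R}^2$ to decompose into square bands and triangle bands in strict alternation, exactly as in $E_1$.

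Finally I would build the isomorphism $X\cong E_1$ by fixing one quadrangle of $X$, mapping it to a quadrangle of $E_1$, and extending across bands: within a band the labelling propagates uniquely along the $QQ$ chain, and the forced triangle band determines the gluing to the next square band. Since $\mathbb{R}^2$ is simply connected there is no monodromy obstruction, so the local isomorphisms patch to a global one. I expect the main obstacle to be precisely this global step: showing that the band-to-band attachment is combinatorially unique (no shift ambiguity when crossing a triangle band) and that the alternating decomposition genuinely exhausts the plane, together with the exclusion of degenerate cyclic bands, which is where planarity and the Euler-characteristic bookkeeping do the real work.
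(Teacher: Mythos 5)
Your proposal is correct and follows essentially the same route as the paper: both organize the quadrangles and triangles into alternating bi-infinite bands, invoke the Euler-characteristic disk lemma (Lemma \ref{lemma:disc33344}) to rule out closed bands, and then build the isomorphism to $E_1$ by propagating band by band. The only cosmetic difference is that you track each square band via the chain of quadrangles glued along $QQ$-edges, while the paper tracks the boundary path $P_0$ of $TQ$-edges of that same band.
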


\begin{proof}
Let the type of $X$ be $[3^{3}, 4^{2}]$. Choose a vertex $v_{0,0}$. Let the two quadrangle through $v_{0,0}$ be $v_{-1,0}\mbox{-} v_{0,0}\mbox{-} v_{0,1}\mbox{-}v_{-1,1} \mbox{-} v_{-1,0}$ and $v_{0,0}\mbox{-} v_{1,0}\mbox{-} v_{1,1}\mbox{-} v_{0,1}\mbox{-} v_{0,0}$. Then the second quadrangle through $v_{1,0}$ is of the form $v_{1,0}\mbox{-} v_{2,0}\mbox{-} v_{2,1}\mbox{-} v_{1,1}\mbox{-} v_{1,0}$ and the second quadrangle through $v_{-1,0}$ is of the form $v_{-2,0}\mbox{-} v_{-1,0}\mbox{-} v_{-1,1}\mbox{-} v_{-2,1} \mbox{-}v_{-2,0}$.
Continuing this way, we get a path $P_0 := \cdots \mbox{-} v_{-2,0}\mbox{-} v_{-1,0} \mbox{-} v_{0,0}\mbox{-}$ $v_{1,0}\mbox{-}v_{2,0}\mbox{-} \cdots$ in the edge graph of $X$ such that all the quadrangles incident with a vertex of $P_0$ lie on one side of $P_0$ and all the triangles incident with the same vertex lie on the other side of $P_0$. If $P_0$ has a closed sub-path then $P_0$ contains a cycle $W$. In that case, the bounded part of $X$ with boundary $W$ is a map on the 2-disk $\mathbb{D}^2$ which satisfies (a) or (b) of Lemma \ref{lemma:disc33344}. This is not possible by Lemma \ref{lemma:disc33344}. Thus, $P_0$ is an infinite path.
Then the faces through vertices of $P_0$ forms an infinite strip which is bounded by two infinite paths, say $P_{-1} = \cdots \mbox{-} v_{-2,-1}\mbox{-}v_{-1,-1}\mbox{-} v_{0,-1}\mbox{-} v_{1,-1}\mbox{-}v_{2,-1}\mbox{-} \cdots$ and $P_1=\cdots \mbox{-} v_{-2,1}\mbox{-}v_{-1,1}\mbox{-}v_{0,1}\mbox{-} v_{1,1}\mbox{-}v_{2,1}\mbox{-} \cdots$, where the faces between $P_0$ and $P_1$ are quadrangles and the faces  between $P_0$ and $P_{-1}$ are triangles and the faces through $v_{i,0}$ are $v_{i-1,0}\mbox{-} v_{i,0}\mbox{-} v_{i,1}\mbox{-}v_{i-1,1} \mbox{-} v_{i-1,0}$, $v_{i,0}\mbox{-} v_{i+1,0}\mbox{-} v_{i+1,1}\mbox{-} v_{i,1}\mbox{-} v_{i,0}$, $v_{i,0} v_{i+1,0} v_{i,-1}$, $v_{i,0} v_{i,-1} v_{i-1,-1}$, $v_{i,0} v_{i-1,-1} v_{i-1,0}$.

Similarly, starting with the vertex $v_{0,1}$ in place of $v_{0, 0}$ we get the paths $P_0, P_1$, $P_2=\cdots \mbox{-} v_{-2,2}\mbox{-}v_{-1,2}\mbox{-}v_{0,2}\mbox{-} v_{1,2}\mbox{-}v_{2,2}\mbox{-} \cdots$, where the faces between $P_1$ and $P_2$ are triangles and the triangles through $u_{i,1}$ are $v_{i,1} v_{i+1,1} v_{i,2}$,
$v_{i,1} v_{i,2} v_{i-1,2}$, $v_{i,1} v_{i-1,2} v_{i-1,1}$.
Continuing this way we get paths $\cdots, P_{-2}, P_{-1}, P_0, P_1, P_2, \cdots$ such that (i) the faces between $P_{2j}$ and $P_{2j+1}$ are rectangles, (ii) the faces between $P_{2j-1}$ and $P_{2j}$ are triangles, (iii) the five faces through $v_{i,2j}$ are $v_{i-1,2j} \mbox{-} v_{i,2j}\mbox{-} v_{i,2j+1}\mbox{-}v_{i-1,2j+1} \mbox{-} v_{i-1,2j}$, $v_{i,2j}\mbox{-} v_{i+1,2j}\mbox{-} v_{i+1,2j+1}\mbox{-} v_{i,2j+1}\mbox{-} v_{i,2j}$,
$v_{i,2j} v_{i+1,2j} v_{i,2j-1}$, \linebreak $v_{i,2j} v_{i,2j-1} v_{i-1,2j-1}$, \,
$v_{i,2j} v_{i-1,2j-1} v_{i-1,2j}$, and (iv) the five faces through $v_{i,2j+1}$ are \newline
$v_{i-1,2j}\mbox{-}v_{i,2j}\mbox{-}v_{i,2j+1}\mbox{-}v_{i-1,2j+1} \mbox{-} v_{i-1,2j}$, $v_{i,2j}\mbox{-} v_{i+1,2j}\mbox{-} v_{i+1,2j+1}\mbox{-} v_{i,2j+1}\mbox{-} v_{i,2j}$, $v_{i,2j+1} v_{i+1,2j+1} v_{i,2j+2}$,
$v_{i,2j+1} v_{i,2j+2} v_{i-1,2j+2}$,
$v_{i,2j+1} v_{i-1,2j+2} v_{i-1,2j+1}$
for all $j\in \mathbb{Z}$.
Then the mapping $f : V(X)\rightarrow V(E_1)$, given by $f(v_{k, t}) = u_{k,t}$ for $k, t \in \mathbb{Z}$, is an isomorphism. This proves the lemma.
\end{proof}

\begin{proof}[Proof of Theorem \ref{theo:torus}]
Let $X$ be an equivelar map of type $[6^3]$ on the torus.
Let $Y$ be the dual of $X$. Then $Y$ is an equivelar map of type $[3^6]$ on the torus and ${\rm Aut}(Y) \equiv {\rm Aut}(X)$. By Lemma \ref{lemma:face_transitive}, ${\rm Aut}(Y)$ acts face-transitively on $Y$. These imply, ${\rm Aut}(X)$ acts vertex-transitively on $X$. So, $X$ is vertex-transitive.

Now, assume that $X$ is a semi-equivelar map of type $[3^3,4^2]$ on the torus.
Since $\mathbb{R}^2$ is the
universal cover of the torus, by pulling back $X$ (using similar arguments as in the proof of Theorem 3 in \cite[Page 144]{Sp1966}), we get a semi-equivelar map $\widetilde{X}$ of type $[3^3,4^2]$ on
$\mathbb{R}^2$ and a polyhedral covering map $\eta_1 \colon \widetilde{X} \to X$.  Because of Lemma \ref{lemma:plane33344}, we may assume that $\widetilde{X} = E_1$.
Let $\Gamma_1$ be the group of covering transformations. Then $|X|
= |E_1|/\Gamma_1$.

Let $V_1$ be the vertex set of $E_1$. We take origin $(0,0)$ is the middle point of the line segment joining $u_{0,0}$ and $u_{1,1}$. Let $a = u_{1,0}-u_{0,0}$, $b= u_{0,2}-u_{0,0} \in \mathbb{R}^2$. Then $H_1 := \langle x\mapsto x+a, y\mapsto y+b \rangle$ is the group of all the translations of $E_1$. Under the action of $H_1$, vertices form two orbits.
Consider the subgroup $G_1$ of ${\rm Aut}(E_1)$ generated by $H_1$ and the map $x\mapsto -x$. So,
\begin{align*}
  G_1 & =\{\alpha : x \mapsto \varepsilon x + ma+nb \, : \, \varepsilon = \pm 1, m, n\in \mathbb{Z}\} \cong H_1\rtimes \mathbb{Z}_2.
\end{align*}
Clearly, $G_1$ acts vertex-transitively on $E_1$.

\medskip

\noindent {\em Claim.} If $K \leq H_1$ then $K \unlhd G_1$.

\smallskip

Let $g \in G_1$ and $k\in K$. Assume $g(x) = \varepsilon x+ma+nb$ and $k(x) = x + pa+ qb$ for some $m, n, p, q\in \mathbb{Z}$ and $\varepsilon\in\{1, -1\}$.
Then $(g\circ k\circ g^{-1})(x) = (g\circ k)(\varepsilon(x-ma-nb)) = g(\varepsilon(x-ma-nb)+pa+qb)=x-ma-nb+\varepsilon(pa+qb) +ma+nb= x+\varepsilon(pa+qb) = k^{\varepsilon}(x)$. Thus, $g\circ k\circ g^{-1} = k^{\varepsilon}\in K$. This proves the claim.

For $\sigma\in \Gamma_1$, $\eta_1 \circ \sigma = \eta_1$.
So, $\sigma$ maps a face of the map $E_1$ in $\mathbb{R}^2$ to a face of $E_1$ (in $\mathbb{R}^2$). This implies that $\sigma$
induces an automorphism $\sigma$ of $E_1$. Thus, we can identify
$\Gamma_1$ with a subgroup of ${\rm Aut}(E_1)$. So, $X$ is a quotient
of $E_1$ by the subgroup $\Gamma_1$ of ${\rm Aut}(E_1)$, where $\Gamma_1$
has no fixed element (vertex, edge or face). Hence $\Gamma_1$
consists of translations and glide reflections. Since $X =
E_1/\Gamma_1$ is orientable, $\Gamma_1$ does not contain any glide
reflection. Thus $\Gamma_1 \leq H_1$. By the claim, $\Gamma_1$ is a normal subgroup of $G_1$. Since $G_1$ acts transitively on $V_1$, $G_1/\Gamma_1$ acts
transitively on the vertices of $E_1/\Gamma_1$. Thus, $X$ is vertex-transitive.
\end{proof}

\newpage

\section{Examples of maps on the torus and Klein bottle} \label{sec:examples}

\begin{example} \label{eg:8maps-torus}
{\rm Eight types of semi-equivelar maps on the torus given in Figure 3.}

\begin{figure}[ht!]
\tiny
\tikzstyle{ver}=[]
\tikzstyle{vert}=[circle, draw, fill=black!100, inner sep=0pt, minimum width=4pt]
\tikzstyle{vertex}=[circle, draw, fill=black!00, inner sep=0pt, minimum width=4pt]
\tikzstyle{edge} = [draw,thick,-]



\end{figure}

In the next two proofs, we denote the $n$-cycle whose edges are $u_{1}u_{2}, \dots, u_{n-1}u_{n}, u_{n}u_{1}$ by $C_n(u_1, \dots, u_n)$. This helps us to compare different sizes of cycles.

\begin{lemma} \label{lemma:torus-se-nonwr}
The semi-equivelar maps $T_2, \dots, T_8$ in Example $\ref{eg:8maps-torus}$ are not vertex-transitive.
\end{lemma}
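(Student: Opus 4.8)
\emph{Strategy.} The plan is to treat the seven maps one at a time and, for each, exhibit a function on its vertex set that is invariant under every automorphism yet takes at least two values; the two vertices realising distinct values then lie in different $\mathrm{Aut}$-orbits, so the map is not vertex-transitive. The decisive advantage of this route is that it needs no knowledge of $\mathrm{Aut}(T_i)$ itself: any quantity built solely from the incidence pattern of faces and edges is automatically preserved, because an automorphism carries each $p$-gon to a $p$-gon.

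\emph{Which invariant.} Semi-equivelarity makes the cyclic face-pattern at a vertex, together with the edge-types (classified by the pair of face-sizes an edge bounds), identical at every vertex, and the contractible cycles bounding faces all have the fixed face-lengths; hence none of these separates vertices and one must use data reaching beyond the first neighbourhood. The non-transitivity is a global effect of the shear built into the gluings of Figure~3, and the invariant I would rely on is the length spectrum of a canonical family of \emph{essential} (torus-winding) cycles through a vertex (when the torus is small the radius-two neighbourhood may already differ, giving a quicker separation, but this is not guaranteed). When the vertex degree is even, as for $T_5\,([3^1,4^1,6^1,4^1])$ and $T_6\,([3^1,6^1,3^1,6^1])$, the natural family is the straight geodesics obtained by leaving each vertex through the edge opposite the one of entry, which is well defined precisely because an even degree splits the surrounding faces evenly. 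For the odd-degree maps $T_2\,([3^2,4^1,3^1,4^1])$, $T_3\,([3^1,12^2])$, $T_4\,([4^1,6^1,12^1])$, $T_7\,([4^1,8^2])$ and $T_8\,([3^4,6^1])$, where no opposite edge exists, I would use Petrie (zigzag) cycles, which are canonical for any map. In each case the closed-up cycles have lengths governed by the shear, and I would read two unequal lengths directly off the labelled figures.

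\emph{A systematic fallback.} If exhibiting an explicit non-constant spectrum proves awkward for one of the asymmetric vertex figures, I would fall back on the covering-space description already used in the proof of Theorem~\ref{theo:torus}: each $T_i$ is the quotient of its (rigid, by the analogue of Lemma~\ref{lemma:plane33344}) Archimedean universal cover by a deck lattice $\Gamma_i$, so that $\mathrm{Aut}(T_i)\cong N(\Gamma_i)/\Gamma_i$, where $N(\Gamma_i)$ is the normaliser of $\Gamma_i$ inside the planar symmetry group. Transitivity on the vertices of $T_i$ is then equivalent to transitivity of $N(\Gamma_i)$ on the vertices of the plane tiling, and the shear is chosen exactly so that the point symmetries of the tiling fail to normalise $\Gamma_i$, leaving $N(\Gamma_i)$ too small to merge all vertex classes.

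\emph{Main obstacle.} The hard part is not the arithmetic but pinning down a genuinely intrinsic, genuinely non-constant invariant for each of the seven maps, since the local structure is deliberately uniform and only the essential, torus-wrapping data distinguishes vertices; defining the canonical essential cycle through the odd-degree, non-symmetric vertex figures of $T_2$ and $T_8$ without tacitly appealing to the drawing is the most delicate point. Once such a canonical family is fixed, the remainder is the routine bookkeeping of tracing two cycles of different length through the vertex labels of each figure.
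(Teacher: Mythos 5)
Your overall strategy is the same one the paper uses: build a canonical, purely combinatorial structure on the vertex set that every automorphism must preserve, observe that it decomposes into cycles, and conclude non-transitivity from the presence of cycles of two different lengths. The paper's specific choice is, for each $T_i$, a $2$-regular auxiliary graph $\mathcal{G}_i$ whose edges are tailored to the face-vector: diagonals of $4$-gons for $T_2$ and $T_5$, long diagonals of $12$-gons for $T_3$, diagonals of $4$-gons together with long diagonals of $12$-gons for $T_4$, long diagonals of $6$-gons for $T_6$, diagonals of $4$-gons together with edges common to two $8$-gons for $T_7$, and ``nice'' edges plus long diagonals of $6$-gons for $T_8$; in each case two explicit component cycles of unequal length are listed from the labelled figure. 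Your geodesics and Petrie polygons are a legitimate alternative family of canonical cycles, so the skeleton of your argument is sound.

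The genuine gap is that you never carry out the verification for even one of the seven maps, and that verification \emph{is} the proof. A canonical invariant is automatically preserved by $\mathrm{Aut}(T_i)$, but nothing guarantees it is non-constant: a map can fail to be vertex-transitive while every Petrie polygon, or every straight geodesic, through every vertex has the same length (the invariant only needs to separate \emph{some} pair of orbits, and your chosen family might not see the one that matters). So the statement ``I would read two unequal lengths directly off the labelled figures'' is precisely the step that must be exhibited, not presumed; until you name two concrete cycles of different lengths through labelled vertices of each $T_i$ (as the paper does, e.g.\ $C_4(u_1,u_4,u_8,u_{11})$ versus a $12$-cycle on the $v_j$ in $T_2$), no map has actually been shown non-transitive. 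Your fallback via $\mathrm{Aut}(T_i)\cong N(\Gamma_i)/\Gamma_i$ has a further unproved prerequisite: it needs the universal cover of each $T_i$ to be the corresponding Archimedean tiling \emph{and nothing else}, i.e.\ a uniqueness lemma in the style of Lemma \ref{lemma:plane33344}. The paper establishes such rigidity only for the types $[3^3,4^2]$, $[3^2,4^1,3^1,4^1]$ and $[3^1,6^1,3^1,6^1]$; for $[3^1,12^2]$, $[4^1,6^1,12^1]$, $[3^1,4^1,6^1,4^1]$, $[4^1,8^2]$ and $[3^4,6^1]$ you would have to prove it, and you would then still have to compute the normaliser of each $\Gamma_i$ explicitly. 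Either route can be completed, but as written the proposal stops exactly where the work begins.
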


\begin{proof}
Let $\mathcal{G}_2$ be the graph whose vertices are the vertices of $T_2$ and edges are the diagonals of $4$-gons of $T_2$. Then $\mathcal{G}_2$ is a $2$-regular graph. Hence, $\mathcal{G}_2$ is a disjoint union of cycles. Clearly, ${\rm Aut}(T_2)$ acts on $\mathcal{G}_2$. If the action of ${\rm Aut}(T_2)$ is vertex-transitive on $T_2$ then it would be vertex-transitive on $\mathcal{G}_2$. But this is not possible since $C_4(u_1, u_4, u_{8}, u_{11})$, $C_{12}(v_{1}, v_4, v_{9}, v_{12}, v_3, v_{6}, v_{8}, v_{11}, v_2, v_5, v_{7}, v_{10})$ are components of $\mathcal{G}_2$ of different sizes.

Let $\mathcal{G}_3$ be the graph whose vertices are the vertices of $T_3$ and edges are the long diagonals of $12$-gons of $T_3$. Then $\mathcal{G}_3$ is a $2$-regular graph. Hence, $\mathcal{G}_3$ is a disjoint union of cycles. Clearly, ${\rm Aut}(T_3)$ acts on $\mathcal{G}_3$. If the action of ${\rm Aut}(T_3)$ is vertex-transitive on $T_3$ then it would be vertex-transitive on $\mathcal{G}_3$. But this is not possible since $C_4(a_{17}, a_{22}, a_{19}, a_{24})$ and $C_{12}(c_{1},$ $a_6, b_{9}, c_{14}, a_1, b_6, c_9, a_{14}, b_1, c_6, a_9, b_{14})$ are components of $\mathcal{G}_3$ of different sizes.

Let $\mathcal{G}_4$ be the graph whose vertices are the vertices of $T_4$ and edges are the diagonals of $4$-gons and long diagonals of $12$-gons of $T_4$. Then $\mathcal{G}_4$ is a $2$-regular graph. Clearly, ${\rm Aut}(T_4)$ acts on $\mathcal{G}_4$. If the action of ${\rm Aut}(T_4)$ is vertex-transitive on $T_4$ then it would be vertex-transitive on $\mathcal{G}_4$. But this is not possible since $C_8(v_{2},$ $u_{4},$ $x_{5},$ $w_{10},$ $v_8,$ $u_{10},$ $x_{11},$ $w_4)$ and
$C_4(x_{1},$ $u_{2},$ $x_7,$ $u_8)$ are components of $\mathcal{G}_4$ of different sizes.

Let $\mathcal{G}_5$ be the graph whose vertices are the vertices of $T_5$ and edges are the diagonals of $4$-gons of $T_5$. Then $\mathcal{G}_5$ is a $2$-regular graph. Hence, $\mathcal{G}_5$ is a disjoint union of cycles. Clearly, ${\rm Aut}(T_5)$ acts on $\mathcal{G}_5$. If the action of ${\rm Aut}(T_5)$ is vertex-transitive on $T_5$ then it would be vertex-transitive on $\mathcal{G}_5$. But this is not possible since $C_6(x_9,$ $v_3,$ $x_{3},$ $v_6,$ $x_{6},$ $v_{9})$ and $C_4(u_{2},$ $v_2,$ $x_{1},$ $w_2)$ are components of $\mathcal{G}_5$ of different sizes.

Let $\mathcal{G}_6$ be the graph whose vertices are the vertices of $T_6$ and edges are the long diagonals of $6$-gons of $T_6$. Then $\mathcal{G}_6$ is a $2$-regular graph. Hence, $\mathcal{G}_6$ is a disjoint union of cycles. Clearly, ${\rm Aut}(T_6)$ acts on $\mathcal{G}_6$. If the action of ${\rm Aut}(T_6)$ is vertex-transitive on $T_6$ then it would be vertex-transitive on $\mathcal{G}_6$. But this is not possible since $C_8(w_{1},$ $w_{2},$ $w_7,$ $w_8,$ $w_5,$ $w_6,$ $w_3,$ $w_4)$ and $C_4(u_{1},$ $u_{2},$ $u_3,$ $u_{4})$ are components of $\mathcal{G}_6$ of different sizes.

Let $\mathcal{G}_7$ be the graph whose vertices are the vertices of $T_7$ and edges are the diagonals of $4$-gons and common edges between any two $8$-gons of $T_7$. Then $\mathcal{G}_7$ is a $2$-regular graph. Hence, $\mathcal{G}_7$ is a disjoint union of cycles. Clearly, ${\rm Aut}(T_7)$ acts on $\mathcal{G}_7$. If the action of ${\rm Aut}(T_7)$ is vertex-transitive on $T_7$ then it would be vertex-transitive on $\mathcal{G}_7$. But this is not possible since $C_8(v_{1},$ $w_{2},$ $w_3,$ $x_{4},$ $x_5,$ $u_{6},$ $u_7,$ $v_{12})$ and $C_{24}(v_{2},$ $w_{1},$ $w_{12},$ $x_{11}, x_{10}, u_9, u_8, v_{11}, v_{10}, w_9, w_8, x_7, x_6, u_5, u_4,
v_7, v_6, w_5, w_4, x_3, x_2, u_1, u_{12}, v_{3})$ are components of $\mathcal{G}_7$ of different sizes.

We call an edge $uv$ of $T_8$ {\em nice} if at $u$ (respectively, at $v$) three 3-gons containing $u$ (respectively, $v$) lie on one side of $uv$ and one on the other side of $uv$. (For example, $v_{10}v_{15}$ is nice). Observe that there is exactly one nice edge in $T_8$ through each vertex.
Let $\mathcal{G}_8$ be the graph whose vertices are the vertices of $T_8$ and edges are the nice edges and the long diagonals of $6$-gons. Then $\mathcal{G}_8$ is a $2$-regular graph. Hence, $\mathcal{G}_8$ is a disjoint union of cycles. Clearly, ${\rm Aut}(T_8)$ acts on $\mathcal{G}_8$. If the action of ${\rm Aut}(T_8)$ is vertex-transitive on $T_8$ then it would be vertex-transitive on $\mathcal{G}_8$. But this is not possible since $C_4(v_{7},$ $v_{15},$ $v_{10},$ $v_{18})$ and $C_8(v_{1},$ $v_{23},$ $v_{17},$ $v_{11},$ $v_4,$ $v_{20},$ $v_{14}$ $v_8)$ are components of $\mathcal{G}_8$ of different sizes.
\end{proof}

\begin{proof}[Proof of Theorem \ref{theo:non-wr-torus}]
The result follows from Lemma \ref{lemma:torus-se-nonwr}.
\end{proof}

\begin{lemma}\label{lemma:Kbottle-se-nonwr}
The maps $K_{1},$ \dots, $K_{10}$ are not vertex-transitive.
\end{lemma}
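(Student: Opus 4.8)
The plan is to prove Lemma~\ref{lemma:Kbottle-se-nonwr} by exactly the device used for the torus in Lemma~\ref{lemma:torus-se-nonwr}: to each map $K_i$ I would attach an auxiliary graph $\mathcal{G}_i$ on the vertex set of $K_i$ whose edges are specified purely in terms of the combinatorial face structure of $K_i$ (the diagonals of a $4$-gon, a long diagonal of a $6$-, $8$- or $12$-gon, the common edge of two equal large faces, or a distinguished ``nice'' edge). Since each such recipe is intrinsic, every $g \in {\rm Aut}(K_i)$ carries edges of $\mathcal{G}_i$ to edges of $\mathcal{G}_i$, so ${\rm Aut}(K_i)$ acts on $\mathcal{G}_i$. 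The engine of the argument is the elementary observation that a group acting vertex-transitively on a graph permutes the connected components transitively, hence all components are mutually isomorphic and in particular have the same number of vertices. Consequently, to conclude that $K_i$ is not vertex-transitive it suffices to exhibit two components of $\mathcal{G}_i$ of different sizes.

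For the seven non-equivelar maps $K_1, \dots, K_7$ I would copy the torus constructions almost verbatim, reading off the cycles from the explicit vertex labels in Figure~4. Concretely: take diagonals of the $4$-gons for $K_1$, $K_2$ and $K_5$; long diagonals of the $12$-gons for $K_3$; diagonals of $4$-gons together with long diagonals of $12$-gons for $K_4$; long diagonals of the $6$-gons for $K_6$; and diagonals of $4$-gons together with the common edges of adjacent $8$-gons for $K_7$. In each of these types every vertex meets exactly two of the chosen segments, so $\mathcal{G}_i$ is $2$-regular, hence a disjoint union of cycles, and I would trace two of these cycles through the side-identifications of the Klein-bottle fundamental domain to verify that their lengths differ (typically a short $4$-cycle against a longer $8$- or $12$-cycle, as in the $T_2, \dots, T_8$ computations).

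The genuinely delicate part is the three equivelar maps $K_8$ ($[3^6]$), $K_9$ ($[6^3]$) and $K_{10}$ ($[4^4]$): here all faces are congruent, so no distinction among face sizes is available and the diagonal or common-edge recipes either fail or produce a graph of degree $3$ or $4$ rather than $2$. For $K_8$ the assertion is already established in \cite{DU2005}, so I would simply invoke that. For $K_9$ and $K_{10}$ I would instead build $\mathcal{G}_i$ from a canonically defined ``straight-ahead'' (Petrie-type) structure --- for $[4^4]$ the geodesics obtained by always leaving a vertex through the edge opposite the one by which it was entered, and for $[6^3]$ the long diagonals of the hexagons (or the analogous zig-zag paths). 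These edge sets are again automorphism-invariant, so the component-size criterion still applies even though the resulting graph need not be $2$-regular; one only needs two components of different cardinality. The main obstacle is therefore twofold: first, choosing for each equivelar map a combinatorial gadget that is \emph{provably} preserved by every automorphism (this is the substance of the word ``canonical''), and second, the bookkeeping of how each family of paths closes up under the orientation-reversing identification of the Klein bottle, since it is precisely this reversal that breaks the paths into cycles of unequal length. Once a valid gadget and two unequal components are exhibited for each of $K_8, K_9, K_{10}$, the lemma follows exactly as in the torus case.
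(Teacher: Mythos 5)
Your overall strategy (attach an automorphism-invariant auxiliary graph and compare component sizes) is exactly the paper's, and for $K_1$, $K_3$, $K_4$, $K_5$, $K_6$, $K_7$ your choices of gadget coincide with the paper's and do produce components of unequal length. But there are two genuine gaps. First, for $K_2$ the gadget you propose fails: the six quadrangles of $K_2$ contribute twelve diagonals, and tracing them through the identifications of Figure~4 one finds that the diagonal graph is a \emph{single} $12$-cycle ($v_1\mbox{-}w_2\mbox{-}x_3\mbox{-}w_1\mbox{-}v_2\mbox{-}w_6\mbox{-}x_1\mbox{-}w_5\mbox{-}v_3\mbox{-}w_4\mbox{-}x_2\mbox{-}w_3\mbox{-}v_1$), so it is connected and the component-size criterion yields nothing. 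This is presumably why the paper switches invariants for $K_2$ and instead observes that $K_2$ has exactly two induced $3$-cycles, $C_3(x_1,x_2,x_3)$ and $C_3(v_1,v_2,v_3)$, so that some vertices lie on an induced $3$-cycle and some do not. You cannot assume the torus computation transfers; the orientation-reversing identification can merge what were distinct cycles into one.

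Second, your treatment of the three equivelar maps is a placeholder rather than a proof. Citing \cite{DU2005} for $K_8$ establishes that \emph{some} $[3^6]$ map on the Klein bottle is not vertex-transitive, but the lemma asserts this for the specific map $K_8$ of Figure~4; the paper instead shows directly that the non-edge graph $\mathcal{N}_8$ is $C_6(2,4,3,5,7,9)\sqcup C_3(1,6,8)$, two components of different sizes. More seriously, for $K_9$ you offer ``long diagonals of hexagons or zig-zag paths'' without verifying that the resulting (now $3$-regular, not $2$-regular) graph has components of unequal cardinality; the paper's actual route is quite different and is the delicate core of the proof: it shows that no automorphism of $K_8$ carries the triangle $256$ to the triangle $238$ (any such automorphism would preserve $\mathcal{N}_8$, force $6\mapsto 8$ and $\{2,5\}\mapsto\{2,3\}$, and thereby send a long diagonal of the $6$-cycle to a short one), concludes that ${\rm Aut}(K_8)$ is not face-transitive, and transfers this to $K_9$ by duality since ${\rm Aut}(K_9)={\rm Aut}(K_8)$. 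For $K_{10}$ the paper again uses induced $3$-cycles (their union has vertices of degrees $4$ and $2$). Your ``straight-ahead geodesic'' idea for $K_{10}$ can be made to work, but as written the claims for $K_8$, $K_9$, $K_{10}$ are unverified, and for $K_9$ in particular no correct gadget is actually exhibited.
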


\begin{proof}
Let $\mathcal{H}_{1}$ be the graph whose vertices are the vertices of $K_{1}$ and edges are the diagonals of $4$-gons of $K_{1}$. Then $\mathcal{H}_{1}$ is a $2$-regular graph. Hence, $\mathcal{H}_{1}$ is a disjoint union of cycles. Clearly, ${\rm Aut}(K_{1})$ acts on $\mathcal{H}_{1}$. If the action of ${\rm Aut}(K_{1})$ is vertex-transitive on $K_{1}$ then it would be vertex-transitive on $\mathcal{H}_{1}$. But this is not possible since $C_6(v_{7}, v_{14}, v_9, v_{16}, v_{11}, v_{18})$ and $C_3(v_{20}, v_{24}, v_{22})$ are two components of $\mathcal{H}_{1}$ of different sizes.

There are exactly two induced $3$-cycles in $K_{2}$, namely, $C_3(x_{1}, x_{2}, x_3)$ and $C_3(v_{1}, v_{2}, v_{3})$. So, some vertices of $K_2$ are in an induced 3-cycle and some are not. Therefore, the action of ${\rm Aut}(K_{2})$ on $K_2$ can not be  vertex-transitive.

Like $\mathcal{G}_3$ in the proof of Lemma \ref{lemma:torus-se-nonwr}, let $\mathcal{H}_{3}$ be the graph whose vertices are the vertices of $K_{3}$ and edges are the long diagonals of $12$-gons of $K_{3}$.
Then, ${\rm Aut}(K_{3})$ acts on the 2-regular graph $\mathcal{H}_{3}$. If the action of ${\rm Aut}(K_{3})$ is vertex-transitive on $K_{3}$ then it would be vertex-transitive on $\mathcal{H}_{3}$. But this is not possible since $C_4(a_{17}, a_{22}, a_{19}, a_{24})$ and $C_{24}(a_3, b_4, c_3, a_1, b_6, c_9, a_7, b_8, c_7, a_{13}, b_2, c_5, a_{11}, b_{12}, c_{11}, a_9, b_{14}, c_1, a_{15}, b_{16}, c_{15}, a_5, b_{10}, c_{13})$ are components of $\mathcal{H}_{3}$ of different sizes.

Let $\mathcal{H}_{4}$ be the graph whose vertices are the vertices of $K_{4}$ and edges are the diagonals of $4$-gons and long diagonals of $12$-gons of $K_{4}$ (like $\mathcal{G}_4$ in the proof of Lemma \ref{lemma:torus-se-nonwr}).
Then, ${\rm Aut}(K_{4})$ acts on the 2-regular graph $\mathcal{H}_{4}$. If the action of ${\rm Aut}(K_{4})$ is vertex-transitive on $K_{4}$ then it would be vertex-transitive on $\mathcal{H}_{4}$. But this is not possible since $C_4(v_{5}, w_{2}, v_{11}, w_{8})$ and $C_8(v_{2}, u_{4}, x_5, w_{10}, v_7, u_5, x_4, w_5)$ are components of $\mathcal{H}_{4}$ of different sizes.

Let $\mathcal{H}_{5}$ be the graph whose vertices are the vertices of $K_{5}$ and edges are the diagonals of $4$-gons in $K_{5}$ (like $\mathcal{G}_5$). Then, ${\rm Aut}(K_{5})$ acts on the 2-regular graph $\mathcal{H}_{5}$. If the action of ${\rm Aut}(K_{5})$ is vertex-transitive on $K_{5}$ then it would be vertex-transitive on $\mathcal{H}_{5}$. But this is not possible since $C_{12}(v_1, u_2, u_7, v_8, v_4, u_5, u_1, v_2,$ $v_7, u_8, u_4, v_5)$ and $C_3(u_3, u_9, u_6)$ are components of $\mathcal{H}_{5}$ of different sizes.

Let $\mathcal{H}_{6}$ be the graph whose vertices are the vertices of $K_{6}$ and edges are the long diagonals of $6$-gons of $K_{6}$ (like $\mathcal{G}_6$). Then, ${\rm Aut}(K_{6})$ acts on the 2-regular graph $\mathcal{H}_{6}$. If the action of ${\rm Aut}(K_{6})$ is vertex-transitive on $K_{6}$ then it would be vertex-transitive on $\mathcal{H}_{6}$. But this is not possible since $C_{24}(a_{2}, w_{2}, v_2, a_{5}, w_{3}, v_{1}$, $a_8,$ $w_8$, $v_8, a_7, w_5, v_3, a_6, w_6, v_6, a_1, w_7, v_5, a_4$, $w_4$, $v_4$, $a_3, w_1, v_7)$ and $C_4(u_{1}, u_{2}, u_{3}, u_4)$ are components of $\mathcal{H}_{6}$ of different sizes.

Let $\mathcal{H}_{7}$ be the graph whose vertices are the vertices of $K_{7}$ and edges are the diagonals of $4$-gons and common edges between any two $8$-gons in $K_{7}$ (like $\mathcal{G}_7$). Then ${\rm Aut}(K_{7})$ acts on the 2-regular graph $\mathcal{H}_{7}$. If the action of ${\rm Aut}(K_{7})$ is vertex-transitive on $K_{7}$ then it would be vertex-transitive on $\mathcal{H}_{7}$. But this is not possible since $C_{24}(v_1, w_2, w_3, x_4, x_5, v_{11}, v_{10}$, $w_{9}$, $w_{8}$, $x_7$, $x_6, v_{12}, v_2, w_1, w_{12}, x_{11}, x_{10}, v_8$, $v_9,$ $w_{10}$, $w_{11}, x_{12}, x_1, v_3)$ and $C_{12}(v_5, w_6, w_7, x_8, x_9$, $v_7$, $v_6$, $w_5$, $w_4, x_3, x_2, v_4)$ are components of $\mathcal{H}_{7}$ of different sizes.

Let ${\rm Skel}_1(K_8)$ be the edge graph of $K_8$ and $\mathcal{N}_8$ be the non-edge graph (i.e., the complement of ${\rm Skel}_1(K_8)$) of $K_8$. If ${\rm Aut}(K_8)$ acts vertex-transitively then ${\rm Aut}(K_8)$ acts vertex-transitively on ${\rm Skel}_1(K_8)$ and hence on $\mathcal{N}_8$. But, this is not possible since $\mathcal{N}_8$ is the union of two cycles of different lengths, namely, $\mathcal{N}_8 = C_6(2, 4, 3, 5, 7, 9) \sqcup C_3(1, 6, 8)$.

Consider the triangles $C=256$ and $O=238$ in $K_8$. If there exists $\alpha\in {\rm Aut}(K_8)$ such that $\alpha(C) = O$ then $\alpha$ acts on $\mathcal{N}_8 = C_6(2, 4, 3, 5, 7, 9) \sqcup C_3(1, 6, 8)$ and hence $\alpha(6) = 8$, $\alpha(\{2, 5\}) = \{2, 3\}$. This is not possible, since $25$ is a long diagonal in $C_6(2, 4, 3, 5, 7, 9)$ where as $23$ is a short diagonal in $C_6(2, 4, 3, 5, 7, 9)$. Thus, the action of ${\rm Aut}(K_8)$ on $K_8$ is not face-transitive. Observe that $K_9$ is the dual of $K_8$. Hence the action of ${\rm Aut}(K_9) = {\rm Aut}(K_8)$ on $K_9$ is not vertex-transitive.

There are exactly four induced $3$-cycles in $K_{10}$, namely, $C_3(v_1, v_2, v_3)$, $C_3(v_1, v_4, v_7)$, $C_3(v_2, v_5, v_8)$ and $C_3(v_3, v_6, v_9)$. Let $\mathcal{H}_{10} := C_3(v_1, v_2, v_3) \cup C_3(v_1, v_4, v_7) \cup C_3(v_2, v_5, v_8) \cup C_3(v_3, v_6, v_9)$. Clearly, ${\rm Aut}(K_{10})$ acts on $\mathcal{H}_{10}$. If the action of ${\rm Aut}(K_{10})$ is vertex-transitive on $K_{10}$ then it would be vertex-transitive on $\mathcal{H}_{10}$. But this is not possible since the degrees of all the vertices in $\mathcal{H}_{10}$ are not same.
\end{proof}

\begin{proof}[Proof of Theorem \ref{theo:non-wr-kbottle}]
The result follows from Lemma \ref{lemma:Kbottle-se-nonwr}.
\end{proof}


\section{Proof of Theorem \ref{theo:torus33434&3636}}

\begin{lemma} \label{lemma:disc33434}
Let $X$ be a map on the $2$-disk $\mathbb{D}^2$ whose faces are triangles and quadrangles. For a vertex $x$ of $X$, let $n_3(x)$ and $n_4(x)$ be the number of triangles and quadrangles through $x$ respectively. Then $X$ does not satisfy all the four properties. {\rm (i)} $(n_3(u), n_4(u)) = (3, 2)$ for each internal vertex $u$, {\rm (ii)} $n_3(w)\leq 3$, $n_4(w)\leq 2$, $n_3(w)+n_4(w)\leq 4$, $(n_3(w), n_4(w))\neq (3,0)$, $(0,2)$ for one vertex $w$ on the boundary, {\rm (iii)} $(n_3(v), n_4(v))= (1, 1)$ or $(2,1)$ for each boundary vertex $v\neq w$, and {\rm (iv)} $n_3(v_1)+n_3(v_2)=3$ for each boundary edge $v_1v_2$ not containing $w$.
\end{lemma}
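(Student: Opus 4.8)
The plan is to argue by contradiction, mirroring the Euler-characteristic computation in the proof of Lemma \ref{lemma:disc33344}. Suppose $X$ satisfies all of (i)--(iv). Let $f_0,f_1,f_2$ count the vertices, edges and faces, and let $n_3,n_4$ be the total numbers of triangles and quadrangles. Write $n$ for the number of internal vertices. The boundary of $\mathbb{D}^2$ is a single cycle; removing $w$ from it leaves a path $p_1\mbox{-}p_2\mbox{-}\cdots\mbox{-}p_m$ through the remaining boundary vertices, so $f_0=n+m+1$. By (iii) each $p_i$ has type $(1,1)$ or $(2,1)$; let $a$ (resp.\ $b$) be the number of $p_i$ of type $(1,1)$ (resp.\ $(2,1)$), so $a+b=m$.

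The decisive structural input is (iv). The boundary edges not through $w$ are exactly $p_1p_2,\dots,p_{m-1}p_m$, and along each of them $n_3(p_i)+n_3(p_{i+1})=3$ with every $n_3(p_i)\in\{1,2\}$. Hence the sequence $n_3(p_1),\dots,n_3(p_m)$ strictly alternates between $1$ and $2$, so the counts of $1$'s and $2$'s differ by at most one; that is, $|a-b|\le 1$.

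Next I would extract the remaining quantities by double counting. Each triangle (quadrangle) is incident to $3$ (resp.\ $4$) vertices, so summing $n_3(x)$ and $n_4(x)$ over all vertices gives $3n_3=3n+n_3(w)+a+2b$ and $4n_4=2n+n_4(w)+a+b$. A vertex lying in $j$ faces has degree $j$ when internal and $j+1$ when on the boundary, so the degree sum yields $2f_1=5n+3a+4b+\bigl(n_3(w)+n_4(w)+1\bigr)$. Substituting these together with $f_2=n_3+n_4$ into $f_0-f_1+f_2$ and clearing denominators, the terms in $n$ cancel and I expect
\[ 12\,\chi(\mathbb{D}^2)=a-b-2n_3(w)-3n_4(w)+6. \]
Because $\chi(\mathbb{D}^2)=1$, this forces $a-b=6+2n_3(w)+3n_4(w)\ge 6$, which is incompatible with $|a-b|\le 1$. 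This contradiction proves the lemma.

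The only step requiring real care is the alternation argument giving $|a-b|\le 1$: one must check that (iv) constrains precisely the sub-path $p_1\mbox{-}\cdots\mbox{-}p_m$ and leaves the two edges at $w$ free, so that the parity count of $1$'s versus $2$'s is clean; the rest is the same routine arithmetic as in Lemma \ref{lemma:disc33344}. It is worth noting that the excluded values $(3,0)$ and $(0,2)$ for $(n_3(w),n_4(w))$ play no role in the contradiction, which holds for every admissible boundary type of $w$.
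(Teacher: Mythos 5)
Your proof is correct, and at bottom it is the same argument as the paper's: an Euler-characteristic count on the disk, combined with the observation that condition (iv) forces $n_3$ to alternate between $1$ and $2$ along the boundary path away from $w$, so that the two type-counts satisfy $|a-b|\le 1$. The difference is one of organization. The paper first uses the divisibility constraints $3\mid\sum_x n_3(x)$ and $4\mid\sum_x n_4(x)$ to split on the parity of $m$: the even case is eliminated using precisely the exclusions $(n_3(w),n_4(w))\neq(3,0),(0,2)$ from (ii), and in the odd case one pins down $n_4(w)=1$ and $\varepsilon+n_3(w)=3$ before computing $f_0-f_1+f_2=0$. You instead derive the single identity $12(f_0-f_1+f_2)=a-b+6-2n_3(w)-3n_4(w)$ (I checked the coefficients; the $n$-terms do cancel) and conclude $a-b=6+2n_3(w)+3n_4(w)\ge 6$, contradicting $|a-b|\le 1$ in all cases at once. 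This buys a shorter proof that, as you note, does not need the exclusion of $(3,0)$ and $(0,2)$ in (ii); your identity is also consistent with the paper's computation, since setting it equal to $0$ instead of $12$ reproduces exactly the configurations the paper finds in its odd case. The one point you rightly flag as needing care — that (iv) constrains only the edges $p_ip_{i+1}$ and that the boundary of a polyhedral map on $\mathbb{D}^2$ is a single cycle through $w,p_1,\dots,p_m$ — is the same implicit assumption the paper makes, so no gap there.
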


\begin{proof}
Let $f_0, f_1$ and $f_2$ denote the number of vertices, edges and faces of $X$ respectively. Let $n_3$ (resp., $n_4$) denote the total number of triangles (resp., quadrangles) in $X$. Let there be $n$ internal vertices and $m+1$ boundary vertices. So, $f_0=n+m+1$ and $f_2=n_3+n_4$.

Suppose $X$ satisfies (i), (ii), (iii) and (iv).
First assume that $m$ is even. Let $m=2p$. Then $n_3=(3n+2p+p+n_3(w))/3$ and $n_4=(2n+2p+n_4(w))/4$. So, $n_3(w)\in\{0,3\}$ and $n_4(w)\in \{0, 2\}$. Since $1\leq n_3(w)+n_4(w)\leq 4$, these imply $(n_3(w), n_4(w)) \in \{(3,0), (0,2)\}$, a contradiction. So, $m$ is odd. Let $m=2q+1$. Then $n_4=(2n+2q+1+n_4(w))/4$. So, $n_4(w) =1$. Now, $n_3=(3n+2q+q+\varepsilon+n_3(w))/3$, where $\varepsilon=1$ or 2 depending on whether the number of boundary vertices which are in one triangle is $q+1$ or $q$. So, $\varepsilon+n_3(w) = 3$. This implies that the alternate vertices on the boundary are in 1 and 2 triangles and the degrees of $q+1$ boundary vertices are 4 and the degrees of the other $q+1$ vertices are 3.
Thus, $f_2= (n+q+1)/2+(n+q+1)$ and $f_1= (5n+4(q+1)+3(q+1))/2$. Then $f_0-f_1+f_2 = (n+2q+2) -(5n+7q+7)/2 + (3n+3q+3)/2 = 0$. This is not possible since the Euler characteristic of the 2-disk $\mathbb{D}^2$ is 1. This completes the proof.
\end{proof}

\begin{lemma} \label{lemma:disc3636}
Let $X$ be a map on the $2$-disk $\mathbb{D}^2$ whose faces are triangles and hexagons. For a vertex $x$ of $X$, let $n_3(x)$ and $n_6(x)$ be the number of triangles and hexagons  through $x$ respectively. Then $X$ does not satisfy all the three properties. {\rm (i)} $(n_3(u), n_6(u)) = (2, 2)$ for each internal vertex $u$, {\rm (ii)} $n_3(w), n_6(w)\leq 2$, $1\leq n_3(w)+n_6(w)\leq 3$, for one vertex $w$ on the boundary, and {\rm (iii)} $(n_3(v), n_6(v))= (1, 1)$ for each boundary vertex $v\neq w$.
\end{lemma}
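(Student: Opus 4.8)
The plan is to run the same Euler-characteristic double count that drives Lemmas \ref{lemma:disc33344} and \ref{lemma:disc33434}, now with triangles and hexagons in place of triangles and quadrangles. Write $f_0, f_1, f_2$ for the numbers of vertices, edges and faces of $X$, let $n_3$ and $n_6$ be the total numbers of triangles and hexagons, and suppose $X$ has $n$ interior vertices and $m+1$ boundary vertices, so that $f_0 = n + m + 1$ and $f_2 = n_3 + n_6$. The goal is to express $f_0 - f_1 + f_2$ purely in terms of $n$, $m$ and the local data $(n_3(w), n_6(w))$ at the single exceptional boundary vertex $w$, and then to contradict $\chi(\mathbb{D}^2) = 1$.

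First I would count faces by vertex--face incidences. Each triangle is incident to $3$ vertices and each hexagon to $6$, so $3n_3 = \sum_x n_3(x)$ and $6n_6 = \sum_x n_6(x)$. Using (i) the $n$ interior vertices each contribute $(2,2)$, using (iii) the $m$ ordinary boundary vertices each contribute $(1,1)$, and $w$ contributes $(n_3(w), n_6(w))$; hence
\[
n_3 = \frac{2n + m + n_3(w)}{3}, \qquad n_6 = \frac{2n + m + n_6(w)}{6}.
\]
Next I would count edges via the degree sum $2f_1 = \sum_x \deg(x)$, using the standard fact that an interior vertex has as many incident edges as incident faces while a boundary vertex has one more edge than face. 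Thus interior vertices have degree $4$, ordinary boundary vertices have degree $3$, and $w$ has degree $n_3(w)+n_6(w)+1$, giving $2f_1 = 4n + 3m + n_3(w) + n_6(w) + 1$.

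Substituting these into $f_0 - f_1 + f_2$, the $n$-terms and the $m$-terms cancel and one is left with
\[
f_0 - f_1 + f_2 = \frac{3 - n_3(w) - 2n_6(w)}{6}.
\]
Since $n_3(w), n_6(w) \ge 0$ this quantity is at most $\tfrac12$, whereas $\chi(\mathbb{D}^2) = 1$; forcing it to equal $1$ would require $n_3(w) + 2n_6(w) = -3$, which is impossible. This is the desired contradiction, and it completes the proof.

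The computation is routine, so I do not expect a genuine obstacle; the only points demanding care are the two incidence normalizations (dividing by $3$ versus $6$) and the boundary-vertex edge count (one more edge than face). It is worth noting that, unlike Lemma \ref{lemma:disc33434}, here no case split on the parity of $m$ or on the exact type of $w$ is needed: the Euler defect $(3 - n_3(w) - 2n_6(w))/6$ already has the wrong magnitude for every admissible pair $(n_3(w), n_6(w))$ allowed by (ii), so the bounds in (ii) serve only to guarantee that such a vertex $w$ exists rather than to drive the contradiction. If one preferred to mirror the style of Lemma \ref{lemma:disc33344}, one could instead invoke the integrality of $n_3$ and $n_6$ to pin down $(n_3(w), n_6(w))$ before applying Euler's formula, but this refinement is unnecessary.
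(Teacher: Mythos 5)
Your proof is correct and follows essentially the same route as the paper: an Euler-characteristic double count of vertex--face and vertex--edge incidences on the disk. The only (cosmetic) difference is that the paper first uses the divisibility relation $n_6(w)-n_3(w)=3(2n_6-n_3)$ to pin down $(n_3(w),n_6(w))=(1,1)$ and then computes $f_0-f_1+f_2=0$, whereas you carry $(n_3(w),n_6(w))$ symbolically and show $f_0-f_1+f_2=\bigl(3-n_3(w)-2n_6(w)\bigr)/6<1$ for all admissible values; both computations check out.
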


\begin{proof}
Let $f_0, f_1$ and $f_2$ denote the number of vertices, edges and faces of $X$ respectively. Let $n_3$ (resp., $n_6$) denote the total number of triangles (resp., hexagons) in $X$. Let there be $n$ internal vertices and $m+1$ boundary vertices. So, $f_0=n+m+1$ and $f_2=n_3+n_6$.

Suppose $X$ satisfies (i), (ii) and (iii).
Then $n_3=(2n+m+n_3(w))/3$ and $n_6=(2n+m+n_6(w))/6$. So, $n_6(w) - n_3(w)= 6 n_6-3n_3 = 3(2n_6-n_3)$. Since $0\leq n_3(w), n_6(w)\leq 2$, these imply $n_6(w) - n_3(w) =0$. So, $n_6(w) = n_3(w)$. Since $1\leq n_3(w)+n_4(w)\leq 3$, these imply that $n_6(w) = n_3(w)=1$.
Thus, the exceptional vertex is like other boundary vertices. Therefore, each boundary vertex is in three edges and hence $f_1= (4n+3(m+1))/2$. So, $m+1$ is even, say $m+1=2\ell$.
Thus, $f_1=2n+3\ell$.
Now, since $n_6(w) = n_3(w)=1$, $f_2=n_3+n_6= (2n+m+1)/3+(2n+m+1)/6=(2n+m+1)/2= n+\ell$.
Then $f_0-f_1+f_2= (n+2\ell)-(2n+3\ell)+(n+\ell)= 0$. This is not possible since the Euler characteristic of the 2-disk $\mathbb{D}^2$ is 1. This completes the proof.
\end{proof}

\begin{lemma} \label{lemma:plane33434&3636}
Let $E_2$ and $E_6$ be the Archimedean tilings of $\mathbb{R}^2$ given in Figure $5$.
Let $Y$ be a semi-equivelar map on the plane $\mathbb{R}^2$. {\rm (a)} If the type of $Y$ is $[3^{2}, 4^1, 3^1, 4^1]$ then $Y \cong E_2$. {\rm (b)} If the type of $Y$ is $[3^{1}, 6^1, 3^1, 6^1]$ then $Y \cong E_6$.
\end{lemma}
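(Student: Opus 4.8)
The plan is to prove both parts by the method of Lemma~\ref{lemma:plane33344}: start from a single vertex, show that the local type forces the combinatorial structure to propagate along bi-infinite paths, rule out the possibility that such a path ever closes up by appealing to the disk lemmas just established, and finally organise the whole tiling into an explicit $\mathbb{Z}^2$-indexed array that matches $E_2$ (resp.\ $E_6$).

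For (a), fix a vertex $v_{0,0}$ of $Y$. Since the type is $[3^2,4^1,3^1,4^1]$, every vertex lies in exactly three triangles and two quadrangles with cyclic face pattern $T,T,Q,T,Q$; in particular no two quadrangles are consecutive in any face-cycle, so an edge shared by two quadrangles would force a $4,4$ adjacency at each of its endpoints, which is impossible. Hence every edge of every quadrangle is shared with a triangle. I would first record this forced local picture around $v_{0,0}$ and name its five neighbours. Then I would single out a distinguished path $P_0$ through $v_{0,0}$ by using that at each newly reached vertex the ``second'' quadrangle is uniquely determined, exactly as the quadrangle strip is propagated in Lemma~\ref{lemma:plane33344}, so that along $P_0$ the incident faces split according to a fixed local pattern. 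If $P_0$ contained a closed subpath, the bounded region it encloses would be a map on $\mathbb{D}^2$ whose interior vertices are of type $(n_3,n_4)=(3,2)$ and whose boundary vertices satisfy conditions (ii)--(iv) of Lemma~\ref{lemma:disc33434}; that lemma then gives a contradiction, so $P_0$ is bi-infinite. The faces incident to $P_0$ form a bi-infinite strip bounded by two further paths $P_1$ and $P_{-1}$ of the same kind, and iterating produces a family $\{P_j\}_{j\in\mathbb{Z}}$ exhausting $\mathbb{R}^2$. Labelling the vertices of $P_j$ as $v_{i,j}$ with $i\in\mathbb{Z}$, the map $f(v_{i,j})=u_{i,j}$ is the desired isomorphism $Y\cong E_2$.

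For (b) the argument is identical in outline, now with triangles and hexagons: each vertex is of type $(n_3,n_6)=(2,2)$ with cyclic pattern $T,H,T,H$. I would build a bi-infinite path of alternating triangles and hexagons, use Lemma~\ref{lemma:disc3636} (whose interior condition $(2,2)$ and boundary condition $(1,1)$ are tailored to this situation) to exclude any finite closed subpath, and then assemble the resulting strips into a $\mathbb{Z}^2$-indexed array isomorphic to $E_6$.

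The main obstacle is the second step in each part: choosing the canonical path and verifying that its extension is genuinely forced by the local type, with no competing combinatorial possibility. Unlike the $[3^3,4^2]$ case of Lemma~\ref{lemma:plane33344}, where the triangles and quadrangles at a vertex separate cleanly onto the two sides of the path, here the faces alternate around each vertex, so I must check carefully that the ``next'' face and the two bounding paths of each strip are uniquely determined at every step, and that the boundary of any hypothetically enclosed disk really does meet the precise hypotheses of Lemma~\ref{lemma:disc33434} (resp.\ Lemma~\ref{lemma:disc3636}). Once this local rigidity is established, the global assembly and the definition of the isomorphism are routine.
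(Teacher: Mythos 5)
Your proposal matches the paper's own proof, which likewise argues "by the similar arguments as in the proof of Lemma \ref{lemma:plane33344}" — propagating a distinguished path, invoking Lemma \ref{lemma:disc33434} (resp.\ Lemma \ref{lemma:disc3636}) to rule out a closed subpath, and assembling the resulting strips into the $\mathbb{Z}^2$-indexed array $E_2$ (resp.\ $E_6$). The local-rigidity issue you flag (faces alternating around each vertex rather than separating onto two sides of the path) is real but is left implicit in the paper as well, so your outline is faithful to, and no less complete than, the published argument.
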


\begin{proof}
If the type of $Y$ is $[3^{2}, 4^1, 3^1, 4^1]$ then by the similar arguments as in the proof of Lemma \ref{lemma:plane33344}, we get $Y \cong E_2$. In this case, to show that the path in $Y$ (similar to the path $P_0$ in the proof of Lemma \ref{lemma:plane33344})
corresponding to the path $\cdots \mbox{-} u_{-2,0} \mbox{-} u_{-1,0} \mbox{-} u_{0,0} \mbox{-}u_{1,0} \mbox{-} u_{2,0} \mbox{-} u_{3,0} \mbox{-}\cdots$ in $E_2$
 is an infinite path, we need to use that there is no map on the 2-disk $\mathbb{D}^2$ which satisfies (i) - (iv) of Lemma \ref{lemma:disc33434}.

If the type of $Y$ is $[3^{1}, 6^1, 3^1, 6^1]$ then by the similar arguments as in the proof of Lemma \ref{lemma:plane33344}, we get $Y \cong E_6$. In this case, to show that the path
in $Y$ corresponding to the path $\cdots \mbox{-} v_{-2,0} \mbox{-}w_{-2,0} \mbox{-} v_{-1,0} \mbox{-} w_{-1,0} \mbox{-} v_{0,0} \mbox{-} w_{0,0}\mbox{-}v_{1,0} \mbox{-}w_{1,0} \mbox{-}v_{2,0} \mbox{-} w_{2,0} \mbox{-} \cdots$ in $E_6$
is an infinite path, we need to use that there is no map on the 2-disk $\mathbb{D}^2$ which satisfies (i) - (iii) of Lemma \ref{lemma:disc3636}.
\end{proof}



\begin{figure}[ht]
\tiny
\tikzstyle{ver}=[]
\tikzstyle{vert}=[circle, draw, fill=black!100, inner sep=0pt, minimum width=4pt]
\tikzstyle{vertex}=[circle, draw, fill=black!00, inner sep=0pt, minimum width=4pt]
\tikzstyle{edge} = [draw,thick,-]

\begin{tikzpicture}[scale=0.2]
\begin{scope}[shift={(-15,-7)}]

\draw[edge, thin](5,5)--(10,5)--(10,10)--(5,10)--(5,5);
\draw[edge, thin](15,2.5)--(20,2.5)--(20,7.5)--(15,7.5)--(15,2.5);
\draw[edge, thin](25,0)--(30,0)--(30,5)--(25,5)--(25,0);
\draw[edge, thin](10,5)--(15,2.5);
\draw[edge, thin](10,10)--(15,7.5);
\draw[edge, thin](10,5)--(15,7.5);
\draw[edge](4,5.5)--(5,5)--(10,5)--(15,2.5)--(20,2.5)--(25,0)--(30,0)--(31,-0.5);

\draw[edge](12,-8.5)--(12.5,-7.5)--(12.5,-2.5)--(15,2.5)--(15,7.5)--(17.5,12.5)--(17.5,17.5)--(18,18.5);

\node[ver] () at (10.3,-5.2){\scriptsize $u_{-1,-2}$};
\node[ver] () at (15,-7){\scriptsize $u_{0,-2}$};
\node[ver] () at (20.2,-7.7){\scriptsize $u_{1,-2}$};

\node[ver] () at (10.2,0){\scriptsize $u_{-1,-1}$};
\node[ver] () at (15.1,-3.5){\scriptsize $u_{0,-1}$};
\node[ver] () at (19.8,-2.7){\scriptsize $u_{1,-1}$};
\node[ver] () at (24.9,-4.5){\scriptsize $u_{2,-1}$};
\node[ver] () at (29.7,-5){\scriptsize $u_{3,-1}$};

\node[ver] () at (7,5.5){\scriptsize $u_{-2,0}$};
\node[ver] () at (13,4.8){\scriptsize $u_{-1,0}$};
\node[ver] () at (13,1.8){\scriptsize $u_{0,0}$};
\node[ver] () at (22.5,2.3){\scriptsize $u_{1,0}$};
\node[ver] () at (26.7,.5){\scriptsize $u_{2,0}$};
\node[ver] () at (32,-.2){\scriptsize $u_{3,0}$};

\node[ver] () at (7.5,10.5){\scriptsize $u_{-2,1}$};
\node[ver] () at (12.5,10){\scriptsize $u_{-1,1}$};
\node[ver] () at (17,8){\scriptsize $u_{0,1}$};
\node[ver] () at (22,7.3){\scriptsize $u_{1,1}$};
\node[ver] () at (27,5.5){\scriptsize $u_{2,1}$};
\node[ver] () at (32,4.8){\scriptsize $u_{3,1}$};

\node[ver] () at (10,14.2){\scriptsize $u_{-2,2}$};
\node[ver] () at (15.5,14.6){\scriptsize $u_{-1,2}$};
\node[ver] () at (19.6,11.7){\scriptsize $u_{0,2}$};
\node[ver] () at (25,12.2){\scriptsize $u_{1,2}$};
\node[ver] () at (29.1,10.6){\scriptsize $u_{2,2}$};

\node[ver] () at (19.2,16.7){\scriptsize $u_{0,3}$};
\node[ver] () at (24.7,17.3){\scriptsize $u_{1,3}$};
\node[ver] () at (29.1,14){\scriptsize $u_{2,3}$};

 \draw[edge, thin](20,2.5)--(25,0);
 \draw[edge, thin](20,7.5)--(25,5);
 \draw[edge, thin](20,2.5)--(25,5);
 \draw[edge, thin](30,5)--(31,7);
 \draw[edge, thin](30,5)--(31,4.5);
 \draw[edge, thin](30,0)--(31,0.5);
 \draw[edge, thin](5,10)--(4,10.4);

 \draw[edge, thin](10,10)--(7.5,15)--(5,10);
 \draw[edge, thin](10,10)--(7.5,15)--(12.5,15)--(10,10);
 \draw[edge, thin](20,7.5)--(17.5,12.5)--(15,7.5);
 \draw[edge, thin](20,7.5)--(17.5,12.5)--(22.5,12.5)--(20,7.5);
 \draw[edge, thin](30,5)--(27.5,10)--(25,5);
 \draw[edge, thin](12.5,15)--(17.5,12.5);
 \draw[edge, thin](22.5,12.5)--(27.5,10);
 \draw[edge, thin](17.5,17.5)--(12.5,15);
\draw[edge, thin](17.5,12.5)--(22.5,12.5)--(22.5,17.5)--(17.5,17.5)--(17.5,12.5);
 \draw[edge, thin](22.5,17.5)--(27.5,15)--(22.5,12.5);
 \draw[edge, thin](27.5,15)--(27.5,10);
 \draw[edge, thin](27.5,15)--(31,15);
 \draw[edge, thin](27.5,10)--(31,10);
 \draw[edge, thin](7.5,15)--(5.5,16);
 \draw[edge, thin](12.5,15)--(12.5,18);
 \draw[edge, thin](17.5,17.5)--(15.5,18.5);
 \draw[edge, thin](5,10)--(4,9.5);
 \draw[edge, thin](5,5)--(3.5,3);
 \draw[edge, thin](7.5,15)--(7.5,18);
 \draw[edge, thin](22.5,17.5)--(23.5,19);
 \draw[edge, thin](22.5,17.5)--(21.5,19);
 \draw[edge, thin](27.5,15)--(28.5,17);

 \draw[edge, thin](10,5)--(7.5,0)--(5,5);
 \draw[edge, thin](10,5)--(7.5,0)--(12.5,-2.5)--(15,2.5);
 \draw[edge, thin](20,2.5)--(17.5,-2.5)--(15,2.5);
 \draw[edge, thin](17.5,-2.5)--(12.5,-2.5);
 \draw[edge, thin](30,0)--(27.5,-5)--(25,0);
 \draw[edge, thin](27.5,-5)--(22.5,-5)--(25,0);
 \draw[edge, thin](22.5,-5)--(17.5,-2.5);
\draw[edge, thin](17.5,-2.5)--(12.5,-2.5)--(12.5,-7.5)--(17.5,-7.5)--(17.5,-2.5);
 \draw[edge, thin](22.5,-5)--(17.5,-7.5);
 \draw[edge, thin](12.5,-2.5)--(12.5,-7.5)--(7.5,-5)--(12.5,-2.5);
 \draw[edge, thin](7.5,-5)--(7.5,0);
 \draw[edge, thin](7.5,-5)--(6.5,-7);
 \draw[edge, thin](7.5,-5)--(4.5,-5);
 \draw[edge, thin](7.5,0)--(4.5,0);
 \draw[edge, thin](22.5,-5)--(22.5,-7.5);
 \draw[edge, thin](17.5,-7.5)--(19.5,-8.5);
 \draw[edge, thin](17.5,-7.5)--(17,-8.5);
 \draw[edge, thin](12.5,-7.5)--(13,-8.5);
 \draw[edge, thin](27.5,-5)--(27.5,-7);
 \draw[edge, thin](27.5,-5)--(29.5,-6);

 \node[ver] () at (17.5,5){\scriptsize $\bullet$};
\node[ver] () at (27.5,2.5){\scriptsize $\bullet$};
\node[ver] () at (7.5,7.5){\scriptsize $\bullet$};

\node[ver] () at (15,-5){\scriptsize $\bullet$};
\node[ver] () at (20,15){\scriptsize $\bullet$};
 \draw [dashed] (3.5,8.5) -- (31.5,1.5);
\draw [dashed] (14,-9) -- (21,19);

\node[ver] () at (15,-12){\small (a): Snub square tiling $E_2$};

\node[ver] () at (15,-14){\mbox{}};

\end{scope}
\end{tikzpicture}
\begin{tikzpicture}[scale=.15]

\draw[edge, thin](-1.5,0)--(51.5,0);
\draw[edge, thin](-1.5,10)--(51.5,10);
\draw[edge, thin](-1.5,20)--(51.5,20);
\draw[edge, thin](-1.5,30)--(51.5,30);

\draw[edge, thin](-0.5,19)--(5.5,31);

\draw[edge, thin](-.5,-1)--(15.5,31);
\draw[edge, thin](9.5,-1)--(25.5,31);
\draw[edge, thin](19.5,-1)--(35.5,31);
\draw[edge, thin](29.5,-1)--(45.5,31);
\draw[edge, thin](39.5,-1)--(51.5,23);
\draw[edge, thin](49.5,-1)--(51.5,3);


\draw[edge, thin](5.5,-1)--(-0.5,11);
\draw[edge, thin](15.5,-1)--(-0.5,31);
\draw[edge, thin](25.5,-1)--(9.5,31);
\draw[edge, thin](35.5,-1)--(19.5,31);
\draw[edge, thin](45.5,-1)--(29.5,31);

\draw[edge, thin](50.5,9)--(39.5,31);

\draw[edge, thin](51.5,27)--(49.5,31);

\draw [dashed] (22.5,15) -- (32.5,15);
\draw [dashed] (22.5,15) -- (17.5,25);
\draw [dashed] (22.5,15) -- (27.5,25);

\node[ver] () at (22.5,15){\scriptsize $\bullet$};
\node[ver] () at (32.5,15){\scriptsize $\bullet$};
\node[ver] () at (27.5,25){\scriptsize $\bullet$};
\node[ver] () at (17.5,25){\scriptsize $\bullet$};

\node[ver] () at (2.5,-1.7){\scriptsize ${w_{-2,-1}}$};
\node[ver] () at (7.5,1.2){\scriptsize ${v_{-1,-1}}$};
\node[ver] () at (12.5,-1.7){\scriptsize ${w_{-1,-1}}$};
\node[ver] () at (17.5,1.2){\scriptsize ${v_{0,-1}}$};
\node[ver] () at (22.5,-1.7){\scriptsize ${w_{0,-1}}$};
\node[ver] () at (27.5,1.2){\scriptsize ${v_{1,-1}}$};
\node[ver] () at (32.5,-1.7){\scriptsize ${w_{1,-1}}$};
\node[ver] () at (37.5,1.2){\scriptsize ${v_{2,-1}}$};
\node[ver] () at (42.5,-1.7){\scriptsize ${w_{2,-1}}$};
\node[ver] () at (47.5,1.2){\scriptsize ${v_{3,-1}}$};
\node[ver] () at (52.5,-1.7){\scriptsize ${w_{3,-1}}$};

\node[ver] () at (6.3,5){\scriptsize ${u_{-2,-1}}$};
\node[ver] () at (16.3,5){\scriptsize ${u_{-1,-1}}$};
\node[ver] () at (25.5,5){\scriptsize ${u_{0,-1}}$};
\node[ver] () at (35.5,5){\scriptsize ${u_{1,-1}}$};
\node[ver] () at (45.5,5){\scriptsize ${u_{2,-1}}$};

\node[ver] () at (2.9,11){\scriptsize ${v_{-2,0}}$};
\node[ver] () at (7.5,8.7){\scriptsize ${w_{-2,0}}$};
\node[ver] () at (12.5,11){\scriptsize ${v_{-1,0}}$};
\node[ver] () at (17.5,8.7){\scriptsize ${w_{-1,0}}$};
\node[ver] () at (22,11){\scriptsize ${v_{0,0}}$};
\node[ver] () at (27,8.7){\scriptsize ${w_{0,0}}$};
\node[ver] () at (32,11){\scriptsize ${v_{1,0}}$};
\node[ver] () at (37,8.7){\scriptsize ${w_{1,0}}$};
\node[ver] () at (42,11){\scriptsize ${v_{2,0}}$};
\node[ver] () at (47,8.7){\scriptsize ${w_{2,0}}$};
\node[ver] () at (52,11){\scriptsize ${v_{3,0}}$};

\node[ver] () at (5,15){\scriptsize ${u_{-2,0}}$};
\node[ver] () at (15,15){\scriptsize ${u_{-1,0}}$};
\node[ver] () at (30,13.5){\scriptsize ${u_{0,0}}$};
\node[ver] () at (40.5,15){\scriptsize ${u_{1,0}}$};
\node[ver] () at (50,15){\scriptsize ${u_{2,0}}$};

\node[ver] () at (2.5,18.5){\scriptsize ${w_{-1,1}}$};
\node[ver] () at (8,20.7){\scriptsize ${v_{-2,1}}$};
\node[ver] () at (13,18.5){\scriptsize ${w_{-2,1}}$};
\node[ver] () at (18,20.7){\scriptsize ${v_{-1,1}}$};
\node[ver] () at (23,18.5){\scriptsize ${w_{-1,1}}$};
\node[ver] () at (27.5,20.7){\scriptsize ${v_{0,1}}$};
\node[ver] () at (32,18.5){\scriptsize ${w_{0,1}}$};
\node[ver] () at (37.5,20.7){\scriptsize ${v_{1,1}}$};
\node[ver] () at (42,18.5){\scriptsize ${w_{1,1}}$};
\node[ver] () at (47.5,20.7){\scriptsize ${v_{2,1}}$};
\node[ver] () at (52,18.5){\scriptsize ${w_{2,1}}$};

\node[ver] () at (6,25){\scriptsize ${u_{-3,1}}$};
\node[ver] () at (15.5,24){\scriptsize ${u_{-2,1}}$};
\node[ver] () at (25.5,24){\scriptsize ${u_{-1,1}}$};
\node[ver] () at (35.5,25){\scriptsize ${u_{0,1}}$};
\node[ver] () at (45,25){\scriptsize ${u_{1,1}}$};

\node[ver] () at (2.5,31){\scriptsize ${v_{-3,2}}$};
\node[ver] () at (7.7,28.5){\scriptsize ${w_{-3,2}}$};
\node[ver] () at (12.5,31){\scriptsize ${v_{-2,2}}$};
\node[ver] () at (17.5,28.5){\scriptsize ${w_{-2,2}}$};
\node[ver] () at (22.5,31){\scriptsize ${v_{-1,2}}$};
\node[ver] () at (27.5,28.5){\scriptsize ${w_{-1,2}}$};
\node[ver] () at (32.5,31){\scriptsize ${v_{0,2}}$};
\node[ver] () at (37.5,28.5){\scriptsize ${w_{0,2}}$};
\node[ver] () at (42.5,31){\scriptsize ${v_{1,2}}$};
\node[ver] () at (47.5,28.5){\scriptsize ${w_{1,2}}$};
\node[ver] () at (52.5,31){\scriptsize ${v_{2,2}}$};

\node[ver] () at (28,-6) {\small (b): Trihexagonal tiling $E_6$};

\node[ver] () at (2,-8){\normalsize \bf Figure 5};

\end{tikzpicture}


\end{figure}


\vspace{-3mm}


\begin{proof}[Proof of Theorem \ref{theo:torus33434&3636}]
Let $X$ be a semi-equivelar map of type $[3^2,4^1,3^1,4^1]$ on the torus. By similar arguments as in the proof of Theorem \ref{theo:torus} and using Lemma \ref{lemma:plane33434&3636} (a), we assume that there exists a polyhedral covering map $\eta_2 \colon E_2 \to X$.
Let $\Gamma_2$ be the group of covering transformations. Then $|X|
= |E_2|/\Gamma_2$.

Let $V_2$ be the vertex set of $E_2$. We take origin $(0,0)$ is the middle point of the line segment joining $u_{0,0}$ and $u_{1,1}$ (see Fig. 5\,(a)). Let $a = u_{2,0}-u_{0,0}$, $b= u_{0,2}-u_{0,0} \in \mathbb{R}^2$. Consider the translations $x\mapsto x+a$, $x\mapsto x+b$. Then $H_2 := \langle x\mapsto x+a, x\mapsto x+b \rangle$ is the group of all the translations of $E_2$. Under the action of $H_2$, vertices form four orbits.
Consider the subgroup $G_2$ of ${\rm Aut}(E_2)$ generated by $H_2$  and the map (the half rotation) $x\mapsto -x$. So,
\begin{align*}
  G_2 & =\{ \alpha : x\mapsto \varepsilon x + ma + nb \, : \, \varepsilon=\pm 1, m, n\in \ZZ\} \cong H_2\rtimes \mathbb{Z}_2.
\end{align*}

Clearly, under the action of $G_2$, vertices of $E_2$ form two orbits. The two orbits are $O_1=\{u_{i,j} \, : \, i+j \mbox{ is odd}\}$ and
$O_2= \{u_{i,j} \, : \, i+j \mbox{ is even}\}$.

\medskip

\noindent {\em Claim.} If $K \leq H_2$ then $K \unlhd G_2$.

\smallskip

Let $g \in G_2$ and $k\in K$. Assume $g(x) = \varepsilon x+ma+nb$ and $k(x) = x + pa+ qb$ for some $m, n, p, q\in \mathbb{Z}$ and $\varepsilon\in\{1, -1\}$.
Then $(g\circ k\circ g^{-1})(x) = (g\circ k)(\varepsilon(x-ma-nb)) = g(\varepsilon(x-ma-nb)+pa+qb)=x-ma-nb+\varepsilon(pa+qb) +ma+nb= x+\varepsilon(pa+qb) = k^{\varepsilon}(x)$. Thus, $g\circ k\circ g^{-1} = k^{\varepsilon}\in K$. This proves the claim.

\smallskip

For $\sigma\in \Gamma_2$, $\eta_2 \circ \sigma = \eta_2$.
So, $\sigma$ maps a face of the map $E_2$ (in $\mathbb{R}^2$) to a face of $E_2$ (in $\mathbb{R}^2$). This implies that $\sigma$
induces an automorphism $\sigma$ of $E_2$. Thus, we can identify
$\Gamma_2$ with a subgroup of ${\rm Aut}(E_2)$. So, $X$ is a quotient
of $E_2$ by a subgroup $\Gamma_2$ of ${\rm Aut}(E_2)$, where $\Gamma_2$
has no fixed element (vertex, edge or face). Hence $\Gamma_2$
consists of translations and glide reflections. Since $X =
E_2/\Gamma_2$ is orientable, $\Gamma_2$ does not contain any glide
reflection. Thus $\Gamma_2 \leq H_2$. By the claim, $\Gamma_2$ is a normal subgroup of $G_2$. Thus, $G_2/\Gamma_2$ acts on $X= E_2/\Gamma_2$.
Since $O_1$ and $O_2$ are the $G_2$-orbits, it follows that $\eta_2(O_1)$ and $\eta_2(O_2)$ are the $(G_2/\Gamma_2)$-orbits. Since the vertex set of $X$ is $\eta_2(V_2) = \eta_2(O_1) \sqcup \eta_2(O_2)$ and $G_2/\Gamma_2 \leq {\rm Aut}(X)$, part (a) follows.


Let $X$ be a semi-equivelar map of type $[3^1, 6^1, 3^1, 6^1]$ on the torus. By similar arguments as in the proof of Theorem \ref{theo:torus} and using Lemma \ref{lemma:plane33434&3636} (b), we assume that there exists a polyhedral covering map $\eta_6 \colon E_6 \to X$.
Let $\Gamma_6$ be the group of covering transformations. Then $|X|= |E_6|/\Gamma_6$.

Let $V_6$ be the vertex set of $E_6$. We take origin $(0,0)$ is the middle point of the line segment joining $u_{-1,0}$ and $u_{0,0}$ (see Fig. 5\,(b)). Let
$r = u_{1,0}-u_{0,0}=v_{1,0}-v_{0,0}= w_{1,0}- w_{0,0}$,
$s = u_{0,1}-u_{0,0}= v_{0,1}-v_{0,0}= w_{0,1}- w_{0,0}$ and
$t = u_{-1,1}-u_{0,0}= v_{-1,1}-v_{0,0}= w_{-1,1}- w_{0,0}$. Consider the translations $x\mapsto x+r$, $x\mapsto x+s$ and $x\mapsto x+t$. Then $H_6 := \langle x\mapsto x+r, x\mapsto x+s, x\mapsto x+t \rangle$ is the group of all the translations of $E_6$.
Since $H_6$ is a group of translations it is abelian.
Under the action of $H_6$, vertices form three orbits. The orbits are $O_u =\{u_{i,j} \, : \, i, j \in \mathbb{Z}\}$, $O_v=\{v_{i,j} \, : \, i, j \in \mathbb{Z}\}$, $O_w=\{w_{i,j} \, : \, i, j \in \mathbb{Z}\}$.

As before, we can identify $\Gamma_6$ with a subgroup of $H_6$. So, $X$ is a quotient of $E_6$ by a group $\Gamma_6$, where $\Gamma_6 \leq H_6\leq {\rm Aut}(E_6)$.
Since $H_6$ is abelian, $\Gamma_6$ is a normal subgroup of $H_6$. Thus, $H_6/\Gamma_6$ acts on $X= E_6/\Gamma_6$.
Since $O_u$, $O_v$ and $O_w$ are the $H_6$-orbits, it follows that $\eta_6(O_u)$, $\eta_6(O_v)$ and $\eta_6(O_w)$ are the $(H_6/\Gamma_6)$-orbits. Since the vertex set of $X$ is $\eta_6(V_6) = \eta_6(O_u) \sqcup \eta_6(O_v) \sqcup \eta_6(O_w)$ and $H_6/\Gamma_6 \leq {\rm Aut}(X)$, part (b) follows.
\end{proof}


\medskip

\noindent {\bf Acknowledgements:}
The first author is supported by DST, India (DST/INT/AUS/P-56/2013(G)) and DIICCSRTE, Australia (project AISRF06660)  under the Australia-India Strategic Research Fund. The second author is supported by NBHM, India for Post-doctoral Fellowship (2/40(34)/2015/R\&D-II/11179). The authors thank the
anonymous referee for some useful comments and for drawing their attention to the reference \cite{BK2008}.


{\small

}

\end{document}